\newtheorem{theorem}{Theorem}[section]
\newtheorem{definition}[theorem]{Definition}
\newtheorem{lemma}[theorem]{Lemma}
\newtheorem{corollary}[theorem]{Corollary}
\newtheorem{proposition}[theorem]{Proposition}
\newtheorem{example}[theorem]{Example}
\newtheorem{remark}[theorem]{Remark}
\newcommand{\T}{\mathbb{T}}
\newcommand{\N}{\mathbb{N}}
\newcommand{\C}{\mathbb{C}}
\newcommand{\Ha}{\mathcal{H}}
\let\norm\undefined 
\DeclarePairedDelimiter\norm{\lVert}{\rVert}
\begin{document}
\title{Random unconditional convergence of vector-valued Dirichlet series}
\author{Daniel Carando, Felipe Marceca, Melisa Scotti, Pedro Tradacete}

\date{}


\renewcommand{\thefootnote}{\fnsymbol{footnote}}
\footnotetext{\emph{Key words and phrases.} Dirichlet series; random unconditionality; type/cotype of a Banach space.}
\footnotetext{\emph{2010 Mathematics Subject Classification.} 30B50, 32A35, 46G20.}



\renewcommand{\thefootnote}{\arabic{footnote}}

\maketitle

\begin{abstract}
	We study random unconditionality of Dirichlet series in vector-valued Hardy spaces $\mathcal H_p(X)$. It is shown that a Banach space $X$ has type 2 (respectively, cotype 2) if and only if for every choice $(x_n)_n\subset X$ it follows that $(x_n n^{-s})_n$ is Random unconditionally convergent (respectively, divergent) in $\mathcal H_2(X)$. The analogous question on $\mathcal H_p(X)$ spaces for $p\neq2$ is also explored. We also provide explicit examples exhibiting the differences between the unconditionality of $(x_n n^{-s})_n$ in $\mathcal H_p(X)$ and that of $(x_n z^n)_n$ in $H_p(X)$.
\end{abstract}

\thispagestyle{empty}

\section{Introduction}
	In this article we investigate some basic questions about random unconditionality of Dirichlet series in vector-valued Hardy spaces.
Given a complex Banach space $X$, a Dirichlet series in $X$  is a series of the form $D=\sum_n x_n n^{-s}$, where the coefficients $x_n$ are vectors in $X$ and $s$ is a complex variable.
The study of functional-analytic aspects of the theory of (vector-valued) Dirichlet series has attracted great attention in the recent years (see, for example, \cite{CaDeSe14,CaDeSe1,DeGaMaPG08,H+,DeSchSP14}, and also \cite{CMGaMa18}, where the vector-valued theory is used to study multiple Dirichlet series).
The Hardy space $\mathcal H_p(X)$ of $X$-valued Dirichlet series consists, loosely speaking, of those Dirichlet series  whose corresponding Hardy $p$-norm via Bohr's transform is finite (see next section for the formal definition). It is well known that, even in the scalar case, the standard basis $(n^{-s})_n$ of the space $\mathcal H_p(\mathbb C)$ is unconditional only when $p=2$ (see \cite[Proposition 4]{CaDeSe1}). 
	Since unconditionality is hard to accomplish, we are lead to consider weaker versions, such as \emph{random unconditionality}. While unconditional convergence of a series $\sum_n y_n$ is equivalent to the convergence of $\sum_n \varepsilon_n y_n$ for all choice of signs $\varepsilon_n=\pm 1$,
 random unconditional convergence is related to the convergence of  $\sum_n \varepsilon_n x_n$ for \emph{almost every choice} of signs $(\varepsilon_n)_n\in \{-1,+1\}^\mathbb N$ (with respect to Haar measure).

We are interested in identifying Random Unconditional Convergent (in short, RUC, see \cite{SAM}) and Random Unconditional Divergent (in short, RUD, see \cite{lopez2016bases}) systems of vector-valued Dirichlet series. Namely, a sequence $(x_n)_n$ (usually part of a biorthogonal system) in a Banach space is called RUC when there is a uniform estimate of the form
$$
\mathbb E \Big\| \sum_n \varepsilon_n a_n x_n\Big\| \lesssim \Big\|\sum_n a_n x_n\Big\|,
$$
for every choice of scalars $(a_n)_n$, where $\mathbb E$ denotes the expectation with respect to i.i.d. Rademacher random variables $\varepsilon_n$; analogously, $(x_n)_n$ is called RUD when the converse estimate holds
$$
\Big\|\sum_n a_n x_n\Big\| \lesssim \mathbb E \Big\|\sum_n \varepsilon_n a_n x_n\Big\|.
$$
With this terminology, one can deduce from \cite[Proposition 4]{CaDeSe1} that the canonical basis $(n^{-s})_{n}\subset \mathcal H_p(\mathbb C)$ is RUC, for $p\geq2$, while it is RUD for $p\leq2$. The main question we want to address here corresponds to the vector-valued version of this phenomenon: When is $(x_n n^{-s})_n$ a RUC system in $\mathcal H_p(X)$ (respectively, RUD), for every choice of $(x_n)_n\subset X$?

These questions have the following equivalent formulations. Suppose a Dirichlet series $D=\sum_n x_n n^{-s}$ belongs to $\mathcal H_p(X)$; does $\sum_n \varepsilon_n x_n n^{-s}$ also belong to $\mathcal H_p(X)$ for almost every choice of signs $(\varepsilon_n)_n\in\{-1,+1\}^\mathbb N$? In the opposite direction, if $\sum_n \varepsilon_n x_n n^{-s}$ belongs to $\mathcal H_p(X)$ for almost every choice of signs $(\varepsilon_n)_n\in\{-1,+1\}^\mathbb N$, does $\sum_n  x_n n^{-s}$ necessarily belong to $\mathcal H_p(X)$? This notion of almost sure sign convergence of Dirichlet series has been the object of recent research in \cite{CaDeSe1}, where the following space was introduced
\begin{equation} \label{hprad-original}
\mathcal H^{rad}_p(X) := \big\{ \textstyle\sum_n x_{n} n^{-s} \colon \,  \sum_n \varepsilon_{n} x_{n} n^{-s} \in \mathcal H_p(X)\, \text{for a.e. } (\varepsilon_{n})_n\in\{-1,+1\}^\mathbb N\big\} \,.
\end{equation}
The previous questions can be reformulated in terms of inclusion relations between $\mathcal H_p(X)$ and $\mathcal H^{rad}_p(X)$. One of our main results in this direction is the following characterization (see Theorem~\ref{T2implicaH2RCP}):

\bigskip
\noindent\textbf{Theorem A} For a Banach space $X$ the following statements are equivalent:
\begin{enumerate}[label=\rm{(\alph*)}]
	\item $(x_n n^{-s})_n$ is RUC in $\mathcal H_2(X)$, for every choice of $(x_n)_n\subset X$.
	\item $\mathcal H_2(X)\subset \mathcal H^{rad}_2(X)$.
	\item $X$ has type 2.
\end{enumerate}
\bigskip

In an analogous way, one can prove that the spaces $X$ where $\mathcal H^{rad}_2(X)\subset \mathcal H_2(X)$, or where $(x_n n^{-s})_n$ is always RUD, are precisely those with cotype 2. The case when $p\ne 2$ will also be considered in Section \ref{sec-typecotype}.

The paper is structured as follows: in the next section we introduce preliminaries and notation on Dirichlet series and random unconditionality. Section \ref{sec-HpRCP} is devoted to provide equivalent reformulations of the property that a Banach space $X$ satisfies that $(x_n n^{-s})_n$ is a RUC system in $\mathcal H_p(X)$ (respectively, RUD), for every choice of $(x_n)_n\subset X$. As an application of Green-Tao's theorem on arithmetic progressions in the set of primes \cite{TEOTAO}, we will provide examples that show how differently Dirichlet series and power series behave in terms of random unconditionality. In particular, we will show that for a fixed sequence of vectors $(x_n)_n$, one could have that $(x_n n^{-s})_n$ is RUC in $\mathcal H_p(X)$, while $(x_n z^n)_n$ is not RUC in $H_p(X)$, and vice versa. Finally, in Section~\ref{sec-typecotype}, we provide the connection between type (or cotype) and the above questions.


%
%

\section{Definitions and general results}

We refer to the books \cite{DGMSxx} and \cite{QueQue13} for the general theory of Dirichlet series.
We denote by $\mathbb T^{\mathbb N}$ the infinite complex polytorus
$$
\mathbb T^{\mathbb N}=\{(z_n)_{n\in\mathbb N}:z_n\in \mathbb C, |z_n|=1\}.
$$
Given $1\leq p<\infty$ and a Banach space $X$, let $L_p(\mathbb T^{\mathbb N},X)$ be the space of
$p$-Bochner integrable functions $f:\mathbb T^{\mathbb N}\rightarrow X$ with respect to the Haar measure. Let us write $\mathbb N_0=\mathbb N\cup\{0\}$ and denote by $\mathbb Z^{(\mathbb N)}$ (respectively $\mathbb N_0^{(\mathbb N)}$ )  the set of eventually null sequences of integer numbers (respectively, non-negative integer numbers). Also, for any sequence of scalars $z=(z_n)_{n}$ and $\alpha=(\alpha_1,\ldots,\alpha_m,0\ldots)\in\mathbb Z^{(\mathbb N)}$, let us denote $z^\alpha=z_1^{\alpha_1}\cdot \ldots \cdot z_m^{\alpha_m}$.

Recall that every $f\in L_p(\mathbb T^{\mathbb N},X)$ is uniquely
determined by its (formal) Fourier series
$$
\sum_{\alpha\in\mathbb Z^{(\mathbb N)}} \hat f(\alpha)z^\alpha,
$$
where
$$
\hat f(\alpha)=\int_{\mathbb T^{\mathbb N}}f(w)w^{-\alpha}dw.
$$
The space $H_p(\mathbb T^{\mathbb N},X)$ is the closed subspace of $L_p(\mathbb
T^{\mathbb N},X)$ consisting of those functions $f$ with $\hat f(\alpha)=0$
whenever $\alpha\in \mathbb Z^{(\mathbb N)}\backslash \mathbb N_0^{(\mathbb
N)}$.

Given $\alpha=(\alpha_1,\ldots,\alpha_m,0\ldots)\in\mathbb N_0^{(\mathbb N)}$,
let us denote $n(\alpha)=p_1^{\alpha_1}\cdot\dots\cdot p_m^{\alpha_m}\in \mathbb N$, where
$p_1<p_2<\dots$ is the ordered sequence of prime numbers. Similarly, given a natural number
$n=p_1^{\alpha_1}\cdot\dots\cdot p_m^{\alpha_m}$, define
$\alpha(n)=(\alpha_1,\ldots,\alpha_m,0\ldots)\in\mathbb N_0^{(\mathbb N)}$.

We can formally consider Bohr's transform
$$
\mathcal B \bigg(\sum_{\alpha \in\mathbb N_0^{(\mathbb N)}} x_\alpha
z^\alpha\bigg)=\sum_{n\in\mathbb N} x_{\alpha(n)}n^{-s},
$$
and define $\mathcal H_p(X)$ as the image of $H_p(\mathbb T^{\mathbb N},X)$
equipped with the norm that turns this mapping into an
isometry. To be more precise, a Dirichlet series $D=\sum_n x_n n^{-s}$ is in $\mathcal H_p(X)$ if there is a function $f\in H_p(\mathbb T^{\mathbb N},X)$ such that $\hat f(\alpha)=x_{n(\alpha)}$ and in that case
\[\|D\|_{\mathcal H_p(X)}=\|f\|_{H_p(\mathbb T^{\mathbb N},X)}.\]
In particular, if $(x_n)_n\subseteq X$ has finitely many non zero elements, we have
$$
\Big\|\sum_{n\in\mathbb N} x_n n^{-s}\Big\|_{\mathcal
H_p(X)}=\Big\|\sum_{\alpha\in\mathbb N_0^{(\mathbb N)}} x_{n(\alpha)}
z^\alpha\Big\|_{H_p(\mathbb T^{\mathbb N},X)}.
$$
Due to this isometry some properties are translated from the power series to the Dirichlet series setting.

\begin{enumerate}[label=(\roman{*})]
	\item \label{coeff} For a Dirichlet series $D \in \Ha_p(X)$ the coefficients of D are bounded by $\|D \|_{\Ha_p(X)}$. More precisely, the operator $c_n$ that takes the $n-$th coefficient is contractive. As a consequence, if a sequence of Dirichlet series $(D_N)_N$ converges in $\mathcal{H}_p(X)$ to some $D$, the coefficients $c_n (D_N)$ converge to $c_n(D)$ for all $n \in \mathbb{N}$.
	\item \label{dense} The set of Dirichlet polynomials $\sum_{n=1}^N x_n n^{-s}$ is dense in $\mathcal{H}_p (X)$ for $1 \le p < \infty$ since the analytic polynomials are dense in $H_p (\mathbb{T}^\N, X) $ \cite[Proposition 24.6]{DGMSxx}.
\end{enumerate}

Recall that a basis $(x_n)_n$ of a Banach space $X$ is unconditional if for every $x\in X$, its expansion $\sum_n a_n x_n$ converges unconditionally. Equivalently, there is a constant $C>0$ such that for every $m \in\mathbb N$ and every sequence of scalars $(a_n)_{n=1}^m$, we have
\begin{equation}\label{unconditional}
\sup_{\varepsilon_n\in\{-1,+1\}} \Big\|\sum_{n=1}^m \varepsilon_n a_n x_n\Big\| \leq
C\Big\|\sum_{n=1}^m a_n x_n\Big\|.
\end{equation}


Banach spaces with unconditional bases have a nice structure, including a wealth of operators acting on them. However, in the landmark paper \cite{GM}, Banach spaces which do not have any subspace with an unconditional basis are constructed. Therefore, weaker versions of unconditionality have to be considered. In this direction, we will next discuss two notions of random unconditionality that were introduced in \cite{SAM} and \cite{lopez2016bases}.

Recall, a series $\sum_n y_n$ in a Banach space is random unconditionally convergent when $\sum_n \varepsilon_n y_n$ converges almost surely on signs $(\varepsilon_n)\in\{-1,+1\}^\mathbb N$ with respect to Haar probability measure on $\{-1,+1\}^\mathbb N$. A basis $(x_n)_n$ of a Banach space $X$ is of Random Unconditional convergence (in short, RUC), if every convergent series $\sum_n \varepsilon_n
 x_n$ is random unconditionally convergent. Analogously, we say $(x_n)_n$ is a basis of Random Unconditional divergence (in short, RUD), if every random unconditionally convergent  series $\sum_n  a_n x_n$ must be convergent.

An equivalent formulation of these notions can be given in terms of the expectation
$$
\mathbb E \Big\|\sum_{n=1}^m \varepsilon_n y_n\Big\|=\frac{1}{2^m}\sum_{(\varepsilon_n)\in\{-1,+1\}^m} \Big\|\sum_{n=1}^m \varepsilon_n y_n\Big\|.
$$
Indeed, $(x_n)_n$ is RUC if and only if there is a constant $C$ such that for every
$m \in \N$ and every sequence of scalars $(a_n)_{n=1}^m$ one has that
\begin{equation}\label{RUC}
\mathbb E \Big\|\sum_{n=1}^m \varepsilon_n a_n x_n\Big\| \leq
C\Big\|\sum_{n=1}^m a_n x_n\Big\|.
\end{equation}
In this case, we will say that $(x_n)_n$ is $C$-RUC. Similarly, $(x_n)_n$ is RUD if and only if there is a constant $C$ such that for every $m \in \N$ and
every sequence of scalars $(a_n)_{n=1}^m$ one has that
\begin{equation}\label{RUD}
\Big\|\sum_{n=1}^m a_n x_n\Big\| \le C\mathbb E \Big\|\sum_{n=1}^m
\varepsilon_n a_n x_n\Big\| .
\end{equation}
In this case, we will say that $(x_n)_n$ is $C$-RUD. It is immediate to see that a basis $(x_n)_n$ is unconditional if and only if it is both RUC and RUD (see~\cite[Proposition 2.3]{lopez2016bases}).

Moreover, the notions of RUC and RUD make also sense in the more general context of biorthogonal systems. Notice that for every sequence $(x_n)_{n}\subseteq X$, the non-zero elements of $(x_n n^{-s})_{n}$ can be considered as part of a biorthogonal system. Indeed, write
$\left( x_n n^{-s} \right)_{n\in A} \in \mathcal{H}_p(X)$  where $A \subseteq \N$ is the set of indexes $n$ for which  $x_n\neq 0$. Applying the Bohr transform we may regard this sequence as $\left( x_{\alpha}z^{\alpha} \right)_{\alpha \subseteq \Lambda} \in H_p (X) $ for a suitable $\Lambda$ corresponding to $A$. For each $\alpha\in \Lambda$ choose $\gamma_{\alpha} \in X^*$ such that $\gamma_{\alpha}(x_{\alpha})=1$. Notice that if we define $\varphi_{\alpha} \in H_p(X)^* $ by
$$
\varphi_{\alpha} (f)= \int_{\mathbb{T}^{\mathbb{N}}} \gamma_{\alpha}\left(f(z) \right) \overline{z}^{\alpha} dz,
$$
then $\left(x_{\alpha}z^{\alpha} , \varphi_{\alpha} \right)_{\alpha\in\Lambda}$ is a biorthogonal system, since
$$
\varphi_{\beta} (x_{\alpha} z^{\alpha}) = \int_{\mathbb{T}^{\mathbb{N}}} \gamma_{\alpha}(x_{\alpha}) z^{\beta} \overline{z}^{\alpha} dz = \delta_{\alpha , \beta}.
$$
From now on, when we say that $(x_n n^{-s})_{n}$ is RUC or RUD we mean that the nonzero elements are RUC or RUD as part of the biorthogonal system just defined. Note that the conditions \eqref{RUC} and \eqref{RUD} which define RUC and RUD can be checked for the whole sequence (it is not necessary to omit the zero elements).

When dealing with expectations of the form $\mathbb E\|\sum_n \varepsilon_n x_n\|$, we will repeatedly make use of \emph{Kahane's inequality} (cf. \cite[11.1]{DJT}): For any $0<p<\infty$, there is $K_p>0$ such that for every $(x_n)_{n=1}^m$ in a Banach space
\begin{equation}\label{kahane}
\frac{1}{K_p}\bigg(\mathbb E\Big\|\sum_{n=1}^m \varepsilon_n x_n\Big\|^p\bigg)^{\frac1p}\leq \mathbb E\Big\|\sum_{n=1}^m \varepsilon_n x_n\Big\|\leq K_p\bigg(\mathbb E\Big\|\sum_{n=1}^m \varepsilon_n x_n\Big\|^p\bigg)^{\frac1p}.
\end{equation}

Another fundamental property that will be used throughout  is the \emph{Contraction principle} (cf. \cite[12.2]{DJT}, see also \cite{Seigner} for the sharp version for complex scalars).
 For any scalars $(a_n)_{n=1}^m\subset \mathbb C$ and any $(x_n)_{n=1}^m$ in a Banach space
\begin{equation}\label{contraction}
\mathbb E\Big\|\sum_{n=1}^m \varepsilon_n a_n x_n\Big\|\leq \frac{\pi}{2} \max|a_n| \mathbb E\Big\|\sum_{n=1}^m \varepsilon_n x_n\Big\|.
\end{equation}

Moreover, since Steinhaus variables are symmetric $(z_n)_{n=1}^m$ and $(\varepsilon_n z_n)_{n=1}^m$ are identically distributed. Therefore, we also have

\begin{equation}\label{contraction2}
\mathbb E\Big\|\sum_{n=1}^m  a_n x_n z_n \Big\|\leq \frac{\pi}{2} \max|a_n| \mathbb E\Big\|\sum_{n=1}^m x_n z_n\Big\|.
\end{equation}

\begin{remark}
	If $(x_n)_n$  is $RUC$ in $X$ then the sequence $(x_n n^{-s})_n$  is $RUC$ in $\mathcal{H}_p (X)$ for every $p \geq 1$.
\end{remark}
\begin{proof}
By Kahane's inequality \eqref{kahane}, and applying the $RUC$ condition to $(x_n)_n$ with coefficients  $a_n z^{\alpha(n)}$ (for a fixed $z\in \T^\N$), we obtain
$$
\mathbb{E} \Big\| \sum_{n=1}^m \varepsilon_n a_n x_n z^{\alpha(n)} \Big\|^p_{X} \lesssim \Big\| \sum_{n=1}^m a_n x_n z^{\alpha(n)} \Big\|^p_{X}.
$$
Integrating with respect to $z$ and switching the order of integration, again by Kahane's inequality, the last expression becomes
\[
\mathbb{E} \Big\| \sum_{n=1}^m \varepsilon_n a_n x_n z^{\alpha(n)} \Big\|_{H_p (\T^\N,X)} \lesssim \Big\| \sum_{n=1}^m a_n x_n z^{\alpha(n)} \Big\|_{H_p (\T^\N,X)}.
\]
Equivalently applying Bohr's transform we get
\[ \mathbb{E} \Big\| \sum_{n=1}^m \varepsilon_n a_n x_n n^{-s} \Big\|_{\mathcal H_p (X)} \lesssim \Big\| \sum_{n=1}^m a_n x_n n^{-s} \Big\|_{\mathcal H_p (X)}.\]
\end{proof}

It  is interesting to observe that the converse is not true.  A simple example of a sequence which is neither $RUC$ nor $RUD$  is the \textit{summing basis} in $c_0$ (see \cite{lopez2016bases}), defined by
\[
s_n= \sum_{i=1}^n e_i  \qquad n \in \mathbb{N},
\]
where $(e_i)$ denotes the canonical basis of $c_0$. However, the sequence $(s_n n^{-s})_n$ is actually  equivalent to the $\ell_2$ basis and thus unconditional. To see this, we recall the following property of the summing basis:
$$ \Big\|\sum_{n=1}^m a_n s_n\Big\|=\sup_{1\le k \le m} \Big| \sum_{n=k}^{m} a_n \Big|.$$

Hence, it follows that
\begin{align*}
\Big\|\sum_{n=1}^m a_n s_n n^{-s}\Big\|_{\Ha_2 (c_0)}= \bigg( \int  \sup\limits_{1 \le k \le m} \Big| \sum_{n=k}^{m} a_n n^{-s}\Big|^2 ds\bigg)^{1/2} =\Big\| \sup_{1 \le k \le m} \Big| \sum_{n=k}^m a_n n^{-s} \Big| \Big\|_{\Ha_2(\mathbb C) }.
\end{align*}
On the one hand, we obviously have
\begin{equation}\label{lower2}
\Big\| \sup_{1 \le k \le m} \Big| \sum_{n=k}^m a_n n^{-s} \Big| \Big\|_{\Ha_2(\mathbb C) }\geq \Big\| \sum_{n=k}^m a_n n^{-s}  \Big\|_{\Ha_2(\mathbb C) }
= \bigg( \sum_{n=1}^{m} \left| a_ n\right|^2  \bigg)^{1/2}.
\end{equation}

On the other hand, a version of Carleson-Hunt's theorem for Dirichlet series  (see the proof of Theorem 1.5 in \cite{hedenmalm2003carleson}) 
provides us with $C>0$ such that
\begin{equation*}
\Big\| \sup_{1 \le k \le m} \Big| \sum_{n=k}^m a_n n^{-s} \Big| \Big\|_{\Ha_2(\mathbb C) } \le  C  \Big\|\sum_{n=1}^m a_n n^{-s}\Big\|_{\Ha_2(\mathbb C)} = C  \bigg(  \sum_{n=1}^m \left| a_ n\right|^2  \bigg)^{1/2}.
\end{equation*}
Joining this with \eqref{lower2} yields the desired result.

\subsection{The Banach space $\mathcal{H}^{rad}_p (X) $}
In order to study the almost sure convergence of random Dirichlet series, we recall the space defined in \cite{CaDeSe1}:
\begin{definition}
	Given $1\le p \le \infty $ and a Banach space $X$, we define
	\begin{equation*}
			\Ha_p ^{rad}(X):= \left\{ \sum x_n n^{-s} : \forall \textrm{ a.e.
		} \varepsilon_n = \pm 1 ,  \sum \varepsilon_n x_n n^{-s}  \in \mathcal{H}_p (X) \right\}.
	\end{equation*}
\end{definition}

A Dirichlet series  $ \sum_n x_n n^{-s}$ should be regarded as a formal expression. As it was mentioned before, when we say $D=\sum_n x_n n^{-s} \in \mathcal{H}_p (X)$ we mean that there is a function $f\in H_p(\T^\N,X)$ with Fourier coefficients $\hat f_\alpha= x_{n(\alpha)}$. Therefore, the fact that $D\in \mathcal{H}_p (X)$ does not necessarily imply that the partial sums $D_N=\sum_{n=1}^N x_n n^{-s}$ converge to $D$ in $\mathcal{H}_p (X)$. Luckily, for random Dirichlet series we have the following result.

%
%
%
%
%
%
\begin{proposition}\label{seryestar} For every Banach space $X$ and every $1 \le p < \infty$. If
 $(x_n)_{n} \subseteq X$ satisfies that $\sum_n \varepsilon_n x_n n^{-s}  \in \mathcal{H}_p (X)$ for almost every choice of signs $\varepsilon_n$ then $\sum_n \varepsilon_n x_n n^{-s}$ converges a.e.
\end{proposition}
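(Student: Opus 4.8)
The plan is to view the partial sums $S_N=\sum_{n=1}^N\varepsilon_n x_n n^{-s}$ as partial sums of a series of independent symmetric random variables $\varepsilon_n v_n$, with $v_n=x_n n^{-s}$, taking values in $\mathcal{H}_p(X)$, and to prove that they converge almost surely; once this is known, continuity of the coefficient operators $c_n$ (item \ref{coeff}) forces the limit to have Fourier coefficients $\varepsilon_n x_n$, hence to coincide with the element $f_\varepsilon$ granted by the hypothesis, so the series converges a.e. Replacing $X$ by $\overline{\operatorname{span}}\{x_n\}$ (an $H_p$ function whose coefficients lie in a closed subspace is a.e.\ valued there) we may assume $X$, and therefore $\mathcal{H}_p(X)$, separable, with $f_\varepsilon\in\mathcal{H}_p(X)$. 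The tool I intend to invoke is the It\^o--Nisio theorem: for sums of independent symmetric vectors in a separable space it suffices to exhibit a random variable $S$ with $\phi(S_N)\to\phi(S)$ in probability for every $\phi\in\mathcal{H}_p(X)^*$, and I will take $S=f_\varepsilon$.

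The first step is a symmetrization estimate. Writing $f_\varepsilon=S_N+R_N$, where the tail $R_N$ has coefficients $\varepsilon_n x_n$ for $n>N$ and is therefore measurable with respect to $\sigma(\varepsilon_{N+1},\varepsilon_{N+2},\dots)$, the pieces $S_N$ and $R_N$ are independent and $R_N$ is symmetric; hence $f_\varepsilon=S_N+R_N$ and $S_N-R_N$ are identically distributed, and $2\|S_N\|\le\|S_N+R_N\|+\|S_N-R_N\|$ yields $\mathbb{P}(\|S_N\|>t)\le 2\,\mathbb{P}(\|f_\varepsilon\|>t)$ uniformly in $N$. Since $\|f_\varepsilon\|<\infty$ a.e., the partial sums are stochastically bounded. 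I also record that $\sup_n\|x_n\|<\infty$: as $c_n$ is contractive, $\|x_n\|=\|c_n f_\varepsilon\|\le\|f_\varepsilon\|$, which is finite for a.e.\ $\varepsilon$ while the left-hand side does not depend on $\varepsilon$.

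Next I scalarize. Fixing $\phi\in\mathcal{H}_p(X)^*$ and setting $a_n=\phi(v_n)$, one has $|a_n|\le\|\phi\|\sup_n\|x_n\|<\infty$ and $\phi(S_N)=\sum_{n\le N}\varepsilon_n a_n$, a scalar symmetric series with bounded terms. Stochastic boundedness of $\|S_N\|$ forces that of $\phi(S_N)$, which for such scalar sums is only possible when $\sum_n|a_n|^2<\infty$ (otherwise the variances diverge and, by the central limit theorem, $\mathbb{P}(|\phi(S_N)|>s)\to1$ for each fixed $s$); hence $\phi(S_N)$ converges a.e. To identify the limit with $\phi(f_\varepsilon)$, observe that $\phi(R_N)=\phi(f_\varepsilon)-\phi(S_N)$ is measurable for the decreasing fields $\mathcal{T}_N=\sigma(\varepsilon_{N+1},\varepsilon_{N+2},\dots)$ and is symmetric; its almost sure limit is thus measurable for the trivial tail field $\bigcap_N\mathcal{T}_N$, hence constant, and being symmetric it must vanish. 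Therefore $\phi(S_N)\to\phi(f_\varepsilon)$ a.e.\ for every $\phi$, and It\^o--Nisio gives $S_N\to f_\varepsilon$ almost surely in $\mathcal{H}_p(X)$.

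The main obstacle is precisely the gap emphasized in the remark preceding the statement: a.e.\ \emph{finiteness} of the limiting object $f_\varepsilon$ does not by itself force the partial sums to converge (in $c_0$ the sums $\sum\varepsilon_n e_n$ stay bounded yet diverge, the point being that there no limiting element exists). The real work is to extract genuine convergence from the mere existence of $f_\varepsilon$, and the delicate point is verifying the It\^o--Nisio hypothesis: symmetrization turns existence of $f_\varepsilon$ into uniform stochastic boundedness, the one-dimensional reduction upgrades this to almost sure convergence of each $\phi(S_N)$, and tail triviality together with symmetry pins the weak limit to $f_\varepsilon$. I expect the measurability of $\varepsilon\mapsto f_\varepsilon$ and the reduction to a separable range to be routine, with the scalar stochastic-boundedness criterion the only place requiring a quantitative probabilistic input.
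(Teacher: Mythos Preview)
Your argument is correct, but the paper's proof is much shorter because it exploits a smarter choice of separating family in It\^o--Nisio. Both proofs reduce to a separable target space and invoke It\^o--Nisio with the limiting variable $R(\varepsilon)=f_\varepsilon$; the difference is which functionals are tested. The paper takes only the coefficient functionals $\varphi_{k,x'}=x'\circ c_k$ (with $x'\in X^*$), which clearly separate points. For these the scalar series $\sum_n\varphi_{k,x'}(\varepsilon_n x_n n^{-s})$ has a single nonzero term, so it trivially equals $\varepsilon_k x'(x_k)=\varphi_{k,x'}(f_\varepsilon)$ once $N\ge k$; no probabilistic estimate whatsoever is needed. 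The only real work in the paper is the measurability of $\varepsilon\mapsto f_\varepsilon$, handled via the approximation $D_\sigma\to D$ from \cite{H+}, a point you explicitly left as ``routine''.

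By contrast, you test \emph{all} $\phi\in\mathcal{H}_p(X)^*$, which forces you through symmetrization to get uniform stochastic boundedness of $S_N$, then a Paley--Zygmund/CLT argument to deduce $\sum_n|\phi(v_n)|^2<\infty$, and finally a Kolmogorov zero--one plus symmetry step to identify the weak limit with $\phi(f_\varepsilon)$. All of this is sound, but it is machinery that the coefficient-functional trick avoids entirely. Your approach does not buy extra generality here either: you still need that Fourier coefficients determine elements of $\mathcal{H}_p(X)$ to argue that $R_N$ is tail-measurable, which is exactly the separating property the paper uses directly.
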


To accomplish this we need the following classical theorem which can be found in \cite[Theorem 2.1.1]{KW}.

\begin{theorem}[It\^o-Nisio]\label{ito}
	Let $Y_1, Y_2, \ldots$ be a sequence of independent symmetric random variables with values in a separable Banach space $Y$. Then the following conditions are equivalent:
	\begin{enumerate}[label=\rm{(\alph*)}]
		\item $ \sum_{n=1}^{\infty} Y_n$ converges a.e.;
		\item	There exists a random variable $R$ with values in $Y$ and a family $\mathcal{F}\subseteq Y'$ separating points in $Y$, such that for each $y'$ in $\mathcal F$ the series $ \sum_{n=1}^{\infty} y'(Y_n)$ converges a.e. to $y'(R)$.
	\end{enumerate}
\end{theorem}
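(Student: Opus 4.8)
The plan is to apply the It\^o--Nisio theorem (Theorem~\ref{ito}) in the separable Banach space $Y:=\mathcal H_p(X_0)$, where $X_0:=\overline{\operatorname{span}}\{x_n:n\in\mathbb N\}$ is the closed subspace of $X$ generated by the coefficients. First I would check that nothing is lost by passing to $X_0$. The hypothesis says that for a.e.\ sign sequence $\omega=(\varepsilon_n)_n$ there is a function in $H_p(\mathbb T^{\mathbb N},X)$ whose Fourier coefficients are $\varepsilon_{n(\alpha)}x_{n(\alpha)}\in X_0$; the polynomials furnished by the density argument of item~\ref{dense} (the Ces\`aro/Fej\'er-type means on the polytorus) have coefficients lying in the linear span of these $\hat f(\alpha)$, hence in $X_0$, and since $X_0$ is closed the representing function is $X_0$-valued. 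Thus the random Dirichlet series actually lives in the separable space $Y$. Setting $Y_n:=\varepsilon_n x_n n^{-s}$, these are independent symmetric $Y$-valued random variables on $\{-1,+1\}^{\mathbb N}$, and the hypothesis provides, for a.e.\ $\omega$, an element $R(\omega):=\sum_n\varepsilon_n(\omega)x_n n^{-s}\in Y$ whose $n$-th Dirichlet coefficient is $\varepsilon_n(\omega)x_n$.

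The decisive step is the choice of the separating family $\mathcal F\subseteq Y'$, for which I would use the coefficient functionals. Fixing a countable set $\Gamma\subseteq X_0^*$ that separates points of $X_0$ (available since $X_0$ is separable), I set $\psi_{n,\gamma}(D):=\gamma(c_n(D))$ for $n\in\mathbb N$ and $\gamma\in\Gamma$, where $c_n$ is the contractive $n$-th coefficient operator from item~\ref{coeff}. Since a Dirichlet series in $\mathcal H_p$ is determined by its coefficients, the countable family $\mathcal F=\{\psi_{n,\gamma}\}$ separates points of $Y$. The point of this choice is that it diagonalizes the series: $\psi_{n,\gamma}(Y_k)=\varepsilon_n\gamma(x_n)$ if $k=n$ and $0$ otherwise, so the scalar series $\sum_k\psi_{n,\gamma}(Y_k)$ reduces to the single term $\varepsilon_n\gamma(x_n)=\gamma(c_n(R))=\psi_{n,\gamma}(R)$. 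Hence for every $\psi_{n,\gamma}\in\mathcal F$ it converges (trivially) a.e.\ to $\psi_{n,\gamma}(R)$, which is exactly condition (b) of Theorem~\ref{ito}. Condition (a) then yields the a.e.\ convergence of $\sum_n\varepsilon_n x_n n^{-s}$ in $Y$, and hence in $\mathcal H_p(X)$.

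The main technical obstacle I expect is verifying that $R$ is a genuine, strongly measurable $Y$-valued random variable, as required by It\^o--Nisio. The functions $\omega\mapsto\psi_{n,\gamma}(R(\omega))=\varepsilon_n(\omega)\gamma(x_n)$ are clearly measurable, and because $\mathcal F$ is a countable point-separating family of continuous functionals on the separable (hence Polish) space $Y$, the induced injection $Y\to\mathbb C^{\mathbb N}$ is a Borel isomorphism onto its image by the Lusin--Souslin theorem, so $\mathcal F$ generates the Borel $\sigma$-algebra of $Y$ and $R$ is Borel measurable. The remaining ingredients—symmetry and independence of the $Y_n$, and separability of $Y$—are immediate, so once the measurability of $R$ and the reduction to $X_0$ are in place, the proof closes.
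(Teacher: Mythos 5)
Your proposal does not prove the statement at hand. The statement is the It\^o--Nisio theorem itself, and your argument opens with ``apply the It\^o--Nisio theorem (Theorem~\ref{ito})''---that is, it assumes precisely what is to be shown, so as a proof of Theorem~\ref{ito} it is circular and contains none of the required content. (For the record, the paper does not prove this theorem either: it is a classical result imported verbatim from \cite[Theorem 2.1.1]{KW} and used as a black box.) A genuine proof would need the standard probabilistic machinery: symmetry and L\'evy-type maximal inequalities to pass from a.e.\ convergence of the scalar series $y'(S_N)$ over a point-separating family, via convergence of the laws and tightness, to convergence in probability of the partial sums $S_N$, and then the equivalence of convergence in probability and a.s.\ convergence for sums of independent symmetric summands. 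None of this appears in your text.

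What you have actually written is a blind proof of Proposition~\ref{seryestar}, the application that immediately follows the theorem in the paper, and read as such it is correct and essentially the paper's own argument: the same random variables $Y_n=\varepsilon_n x_n n^{-s}$, the same coefficient functionals $x'\circ c_k$ as the separating family, and the same observation that the series diagonalizes, so condition (b) of Theorem~\ref{ito} holds trivially. The one genuine point of divergence is the measurability of $R$: the paper obtains it from \cite[Proposition 2.3]{H+} (the vertical translates $D_\sigma$ converge to $D$ in $\mathcal{H}_p(X)$ and their partial sums converge uniformly, yielding measurable approximants and showing that $R(\varepsilon)$ lands a.e.\ in the separable subspace $Y=\overline{\langle x_n n^{-s}\rangle_n}$), whereas you use a countable separating family $\{\gamma\circ c_n\}$ together with a Lusin--Souslin argument identifying the cylinder and Borel $\sigma$-algebras of the separable space $\mathcal{H}_p(X_0)$; that route is valid, arguably more self-contained, and your reduction to $X_0=\overline{\operatorname{span}}\{x_n\}$ plays the role of the paper's choice of $Y$. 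But none of this repairs the basic mismatch: if the target is Theorem~\ref{ito}, you must either supply the It\^o--Nisio argument itself or cite it, as the paper does, rather than present an application of it.
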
	
\begin{proof}[Proof of Proposition \ref{seryestar}]

Let $(x_n)_{n} \subseteq X$ be as stated and for $\varepsilon=(\varepsilon_n)_{n}\in\{-1,+1\}^\mathbb N$ set $R(\varepsilon)=\sum_n \varepsilon_n x_n n^{-s}$. We start by showing that $R$ is a random variable (i.e. a measurable function). For $\sigma >0$ and a Dirichlet series $D= \sum_n a_n n^{-s} \in \Ha_p (X)$ define $D_{\sigma}= \sum_n \left( a_n / n^{\sigma}\right) n^{-s} $. In \cite[Proposition 2.3]{H+} it is shown that $D_\sigma$ converges to $D$ in $\Ha_p (X)$ as $\sigma\to 0$, and the partial sums of $D_{\sigma}$ converge to $D_\sigma$ uniformly. Applying this to $R$ we may construct a sequence of measurable functions converging almost everywhere to $R$. Furthermore, the same argument shows that $R(\varepsilon)$ belongs to ${Y=\overline{\langle x_n n^{-s} \rangle_n} \subseteq \Ha_p(X)}$ for almost every $\varepsilon$.
Set
$Y_n = \varepsilon_n x_n n^{-s} \in \Ha _p (X)$ and for every $k \in \N$ and $x' \in X'$ define $\varphi_{k, x'}= x'\circ c_k$ where $c_k$ (as defined in \ref{coeff}) returns the $k-$th coefficient of a Dirichlet series. Notice that the family $\mathcal{F}=\{\varphi_{k, x'}\}_{k,x'}$ separates points of $Y$. Furthermore, we have
\begin{align*}
	 \varphi_{k, x'}(R(\varepsilon))=  \varepsilon_k x' (x_k) \textit{ and }\,
	 \sum_{n=1}^N \varphi_{k,x'}(Y_n) = \sum_{n=1}^N \delta_{n,k} \varepsilon_k x'(x_k),
\end{align*}
and both coincide when $N\geq k$. This means that assertion $(b)$ of the theorem holds. Thus, the partial sums of $R(\varepsilon)$ converge for almost every choice of signs $\varepsilon_n$.
\end{proof}

\begin{corollary}\label{seryestarcoro} For every Banach space $X$ and every $1 \le p < \infty$
	\begin{equation*}\label{definition RAD P}
	\mathcal{H}^{rad}_p (X) = \left\{  \sum x_n n^{-s} : \forall \textrm{ a.e.
	} \varepsilon_n = \pm 1 ,  \sum \varepsilon_n x_n n^{-s} \textit{ converges in }
	\mathcal{H}_p (X) \right\}.
	\end{equation*}
\end{corollary}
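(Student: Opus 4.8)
The plan is to reconcile the two descriptions of the same space by noting that they differ only in how one reads the condition $\sum_n \varepsilon_n x_n n^{-s} \in \mathcal{H}_p(X)$: as \emph{formal} membership (there is a function in $H_p(\mathbb{T}^\mathbb{N},X)$ with the prescribed Fourier coefficients) or as genuine \emph{norm convergence} of the partial sums. Writing $B$ for the set on the right-hand side, I would establish $\mathcal{H}^{rad}_p(X)=B$ by proving the two inclusions separately, the harder one being already contained in Proposition~\ref{seryestar}.

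First I would treat the soft inclusion $B\subseteq \mathcal{H}^{rad}_p(X)$. Fix $(x_n)_n\in B$ and a choice of signs $\varepsilon=(\varepsilon_n)_n$ for which the partial sums $D_N^{\varepsilon}=\sum_{n=1}^N \varepsilon_n x_n n^{-s}$ converge in $\mathcal{H}_p(X)$ to some $L^{\varepsilon}$; by hypothesis this occurs for a.e. $\varepsilon$. Since $\mathcal{H}_p(X)$ is complete, $L^{\varepsilon}\in\mathcal{H}_p(X)$, and by the coefficient continuity of item~\ref{coeff} we have $c_k(L^{\varepsilon})=\lim_N c_k(D_N^{\varepsilon})=\varepsilon_k x_k$ for every $k$. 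Hence the formal Dirichlet series $\sum_n \varepsilon_n x_n n^{-s}$ is exactly the Dirichlet series of $L^{\varepsilon}$ and so lies in $\mathcal{H}_p(X)$; as this holds for a.e. $\varepsilon$, we conclude $(x_n)_n\in\mathcal{H}^{rad}_p(X)$.

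For the reverse inclusion $\mathcal{H}^{rad}_p(X)\subseteq B$, I would simply invoke Proposition~\ref{seryestar}: its hypothesis is precisely that $\sum_n \varepsilon_n x_n n^{-s}\in\mathcal{H}_p(X)$ for a.e. $\varepsilon$, and its conclusion is that the partial sums converge a.e., which is membership in $B$. Thus the substantive work is entirely front-loaded into Proposition~\ref{seryestar}, and the main obstacle lives there rather than in the corollary: one must check that $R(\varepsilon)=\sum_n \varepsilon_n x_n n^{-s}$ defines a measurable, separably-valued random variable (via the $\sigma$-regularization from \cite{H+}) and that the functionals $\varphi_{k,x'}=x'\circ c_k$ separate points and verify condition~(b) of It\^o--Nisio. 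Granting that proposition, the corollary follows at once, the only care being the bookkeeping distinction between formal membership and norm convergence highlighted above.
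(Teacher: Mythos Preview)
Your proposal is correct and matches the paper's approach exactly: the paper treats the corollary as an immediate consequence of Proposition~\ref{seryestar}, with the easy inclusion $B\subseteq\mathcal{H}^{rad}_p(X)$ following from coefficient continuity (item~\ref{coeff}) just as you describe. Your explicit unpacking of both inclusions is accurate and adds nothing beyond what the paper leaves implicit.
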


Next we are going to endow $\mathcal{H}^{rad}_p (X)$ with a norm. To do this we apply \cite[Proposition 12.3]{DJT} to obtain the following reformulation

\begin{equation*}\label{alternative definition RAD P}
\mathcal{H}^{rad}_p (X) = \left\{  \sum x_n n^{-s} :  \sum r_n
x_n n^{-s} \in L_1 \left([0,1], \mathcal{H}_p (X) \right) \ \right\},
\end{equation*}
where $r_n$ denote Rademacher random variables.
Hence it is natural to define

\begin{equation*}\label{norm in RAD P}
\Big\| \sum x_n  n^{-s} \Big\|_{\mathcal{H}^{rad}_p (X)} := \int_0^1
\Big\| \sum r_n (t) x_n  n^{-s} \Big\|_{\mathcal{H}_p (X)} dt,
\end{equation*}
which turns $\mathcal{H}^{rad}_p (X)$ into a Banach space (see \cite{CaDeSe1}).

Recall that the space of
unconditionally summable sequences $(x_n)_n$ in a Banach space is denoted
\begin{equation*}\label{definition RAD(X)}
Rad(X)
\end{equation*}
and becomes a Banach space under the norm

\begin{equation*}
\left\| (x_n)_n \right\|_{Rad(X)} := \int_0^1 \Big\| \sum r_n(t) x_n \Big\| \,dt,
\end{equation*}
(cf. \cite[Chapter 12]{DJT}). Note that by Kahane's inequalities \eqref{kahane} we may replace the right term by any $p$-norm with $1 \le p < \infty$ to get an equivalent norm.

The next proposition provides an easy way to estimate $\Ha_p^{rad}$ norms without computing Dirichlet norms.

\begin{proposition}\label{rad}
For $1 \le p < \infty $ there is $C_p>0$ such that if $X$ is a Banach space and $(x_n)_{n=1}^N\subset X$ we have
$$
\frac{1}{C_p}\Big\| \sum_{n=1}^N x_n n^{-s} \Big\|_{\mathcal{H}^{rad}_{p} (X)}\leq \left\| (x_n)_n \right\|_{Rad(X)}\leq C_p\Big\| \sum_{n=1}^N x_n n^{-s} \Big\|_{\mathcal{H}^{rad}_{p} (X)},
$$
where we set $x_n=0$ for $n> N$.
In particular, we have that	$\mathcal{H}^{rad}_{p} (X) =  Rad(X)$, up to an equivalent norm.
\end{proposition}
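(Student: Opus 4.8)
The plan is to prove the two-sided estimate directly from the definitions, reducing everything to a combination of Kahane's inequality \eqref{kahane} and the contraction principle \eqref{contraction}; the ``in particular'' statement will then follow by a standard completion/density argument. By definition $\Big\|\sum_n x_n n^{-s}\Big\|_{\mathcal{H}^{rad}_p(X)} = \mathbb{E}_\varepsilon \Big\|\sum_n \varepsilon_n x_n n^{-s}\Big\|_{\mathcal{H}_p(X)}$, and via the Bohr isometry $\mathcal{H}_p(X) \cong H_p(\mathbb{T}^{\mathbb N}, X)$ this equals $\mathbb{E}_\varepsilon \Big\|\sum_n \varepsilon_n g_n\Big\|_{Y}$, where $Y := L_p(\mathbb{T}^{\mathbb N}, X)$ and $g_n \in Y$ is the function $z \mapsto x_n z^{\alpha(n)}$. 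My target is to show this is comparable to $\|(x_n)_n\|_{Rad(X)} = \mathbb{E}_\varepsilon \Big\|\sum_n \varepsilon_n x_n\Big\|_X$, with constants depending only on $p$.

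First I would apply Kahane's inequality \eqref{kahane} inside the Banach space $Y$ to the fixed vectors $(g_n)_n \subset Y$, replacing $\mathbb{E}_\varepsilon \big\|\sum_n \varepsilon_n g_n\big\|_Y$ by $\big(\mathbb{E}_\varepsilon \big\|\sum_n \varepsilon_n g_n\big\|_Y^p\big)^{1/p}$. Expanding the $Y$-norm as an integral over $z$ and using Fubini, this becomes $\Big(\int_{\mathbb{T}^{\mathbb N}} \mathbb{E}_\varepsilon \big\|\sum_n \varepsilon_n x_n z^{\alpha(n)}\big\|_X^p \, dz\Big)^{1/p}$. The core step is then to observe that, for each fixed $z \in \mathbb{T}^{\mathbb N}$, the scalars $z^{\alpha(n)}$ are unimodular, so the contraction principle \eqref{contraction} applied in both directions (once with the multipliers $z^{\alpha(n)}$, and once with the multipliers $z^{-\alpha(n)}$ on the vectors $x_n z^{\alpha(n)}$) yields $\mathbb{E}_\varepsilon \big\|\sum_n \varepsilon_n x_n z^{\alpha(n)}\big\|_X \approx \mathbb{E}_\varepsilon \big\|\sum_n \varepsilon_n x_n\big\|_X$, with constant $\pi/2$ that is crucially independent of $z$. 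A further application of Kahane's inequality, this time in $X$, transfers this equivalence to $p$-th moments.

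Since the resulting bound $\mathbb{E}_\varepsilon \big\|\sum_n \varepsilon_n x_n z^{\alpha(n)}\big\|_X^p \approx_p \big(\mathbb{E}_\varepsilon \big\|\sum_n \varepsilon_n x_n\big\|_X\big)^p$ holds uniformly in $z$ and its right-hand side does not depend on $z$, integrating over $\mathbb{T}^{\mathbb N}$ is harmless and produces exactly $\|(x_n)_n\|_{Rad(X)}^p$. Chaining the comparisons and taking $p$-th roots gives the desired two-sided estimate. For the identification $\mathcal{H}^{rad}_p(X) = Rad(X)$ I would then note that the estimate makes the coefficient map $\sum_n x_n n^{-s} \mapsto (x_n)_n$ a bi-Lipschitz bijection on finitely supported sequences; since these are dense in both spaces (Dirichlet polynomials in $\mathcal{H}^{rad}_p(X)$, and finitely supported sequences in $Rad(X)$ because unconditional summability forces the tails to vanish in the $Rad$-norm), it extends to an isomorphism of the completions.

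I do not anticipate a serious obstacle here: the only genuinely Dirichlet-theoretic ingredient is the Bohr isometry, and once we are inside $L_p(\mathbb{T}^{\mathbb N}, X)$ the monomials $z^{\alpha(n)}$ enter the argument solely through their modulus, so no arithmetic structure of the primes is used. The one point that requires care is precisely the uniformity in $z$ of the contraction-principle constant, since that is what allows the $z$-integration to pass through after the comparison; keeping track of this uniformity is the heart of the proof.
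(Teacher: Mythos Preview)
Your proposal is correct and follows essentially the same route as the paper's proof: Kahane's inequality to pass to $p$-th moments, Fubini to bring the $z$-integral outside, the contraction principle with the unimodular scalars $z^{\alpha(n)}$ to remove the monomials, and Kahane once more to return to first moments. The paper's argument is terser and does not spell out the density argument for the ``in particular'' clause, but the ingredients are identical.
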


\begin{proof}
Given arbitrary $(x_n)_{n=1}^N\subset X$, using Kahane's inequality \eqref{kahane} and the Contraction principle \eqref{contraction}, we have
	\begin{align*}
	\Big\| \sum_{n=1}^N x_n n^{-s} \Big\|_{\mathcal{H}^{rad}_{p} (X)} &= \int_{0}^{1}  \Big\| \sum_{n=1}^N r_n (t) x_n n^{-s} \Big\|_{\mathcal{H}_{p} (X)} dt \\
	&\sim  \bigg( \int_{0}^{1}  \Big\| \sum_{n=1}^N r_n (t) x_n n^{-s} \Big\|^p_{\mathcal{H}_{p} (X)} dt \bigg)^{1/p}\\ 
	&= \bigg( \int_{\mathbb{T}^{\mathbb{N}}} \int_0^1 \Big\| \sum_{\alpha\in (\mathbb N_0)^\mathbb N} r_{n(\alpha)} (t) x_{n(\alpha)} z^{\alpha} \Big\|^p_{X} dt dz\bigg)^{1/p}\\
	&\sim \bigg( \int_0^1 \Big\| \sum_{\alpha\in (\mathbb N_0)^\mathbb N} r_{n(\alpha)} (t) x_{n(\alpha)} \Big\|^p_{X} dt \bigg)^{1/p}\\ 
	&\sim  \int_0^1 \Big\| \sum_{n=1}^N r_n (t) x_n \Big\|_{X} dt.  \\
	\end{align*}
Clearly, the equivalence constants above only depend on $1 \le p < \infty$.	
\end{proof}

\section{The $ \mathcal{H}_p$ random convergence property}\label{sec-HpRCP}

In this section we will focus on characterizing those Banach spaces $X$ such that every sequence $(x_n)_{n}\subset X$ satisfies that $(x_n n^{-s})_{n}$ is $RUC$ in $ \mathcal{H}_p (X)$. By the definition of a $RUC$ system, this means that there exists a constant (depending on the sequence) such that the inequality \eqref{RUC} is satisfied. Next proposition shows that a uniform constant (not depending on the sequences) can be chosen. In fact, this condition is also equivalent to the inclusion $\Ha_p(X) \subseteq \Ha_p^{rad}(X)$.

\begin{proposition}\label{EqRCP}
	Let $X$ be a Banach space and $p \geq 2$. The following statements are equivalent:
	\begin{enumerate}[label=\rm{(\alph*)}]
		\item $(x_n n^{-s})_{n} $ is RUC in $ \mathcal{H}_p (X)$ for every $(x_n)_{n} \subset X$.
		\item There is $C \ge 1$ such that for every $N \in \mathbb{N}$ and $(x_n)^N_{n=1}$ we have
		\begin{align}
		\label{ec3}
		\Big\|\sum_{n=1}^N x_n n^{-s} \Big\|_{\mathcal{H}^{rad}_p (X)} \leq
		C \Big\|\sum_{n=1}^N x_n n^{-s} \Big\|_{\mathcal{H}_p (X)}.
		\end{align}
		\item The following inclusion holds:
		$$ \mathcal{H}_p (X) \subseteq \mathcal{H}^{rad}_p (X). $$
		\item There is $C\geq 1$ such that for every $N \in \N$ and $(x_n)^N_{n=1}$ we have
		\begin{equation}\label{ec4}
		\mathbb{E} \Big\| \sum_{n=1}^N \varepsilon_n x_n \Big\| \le C \bigg( \int_{\mathbb{T}} \Big\| \sum_{n=1}^N x_n z^n \Big\|^p dz \bigg)^{1/p}.
		\end{equation}
	\end{enumerate}
\end{proposition}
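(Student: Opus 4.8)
The plan is to reformulate (b) in Rademacher terms via Proposition \ref{rad}, run the chain (a)$\Leftrightarrow$(b)$\Leftrightarrow$(c) through a closed graph argument, and then treat (b)$\Leftrightarrow$(d) separately by transferring norms between the one-dimensional torus and the infinite polytorus. By Proposition \ref{rad} the left-hand side of \eqref{ec3} is comparable to $\|(x_n)_n\|_{Rad(X)}=\mathbb{E}\|\sum_n\varepsilon_n x_n\|$, so (b) says exactly that the coefficient map $D=\sum_n x_n n^{-s}\mapsto(x_n)_n$ is bounded from $\mathcal{H}_p(X)$ into $\mathcal{H}^{rad}_p(X)=Rad(X)$, while (c) says only that it is everywhere defined. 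Since both $\mathcal{H}_p(X)$ and $\mathcal{H}^{rad}_p(X)$ are Banach spaces on which the coordinate functionals $c_n$ are continuous (item \ref{coeff} and Proposition \ref{rad}), the graph of this map is closed; hence (c)$\Rightarrow$(b) follows from the closed graph theorem, and (b)$\Rightarrow$(c) follows because \eqref{ec3} extends from the dense set of Dirichlet polynomials (item \ref{dense}) to all of $\mathcal{H}_p(X)$, the limit having the correct coefficients.

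For the equivalence with (a), the implication (b)$\Rightarrow$(a) is immediate since a uniform RUC estimate is in particular a sequence-wise one. For (a)$\Rightarrow$(c) I would fix $D=\sum_n x_n n^{-s}\in\mathcal{H}_p(X)$ and aim to show $(x_n)_n\in Rad(X)$. The RUC hypothesis gives \eqref{RUC} for the truncations, but these need not converge to $D$; to circumvent this I would invoke the regularizations $D_\sigma$ used in the proof of Proposition \ref{seryestar}, whose partial sums converge and which satisfy $D_\sigma\to D$ in $\mathcal{H}_p(X)$. Applying \eqref{RUC} to the partial sums of $D_\sigma$ and using Proposition \ref{rad} bounds $\|(x_n/n^\sigma)_n\|_{Rad(X)}$ uniformly in $\sigma$; letting $\sigma\to0$ and using lower semicontinuity of the $Rad(X)$-norm yields $(x_n)_n\in Rad(X)$, i.e.\ $D\in\mathcal{H}^{rad}_p(X)$.

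It remains to prove (b)$\Leftrightarrow$(d). The implication (b)$\Rightarrow$(d) is the easy half: given $(x_n)_{n=1}^N$, place $x_k$ at the index $2^k$, so that $\sum_k x_k(2^k)^{-s}$ corresponds under Bohr's transform to $\sum_k x_k z_1^k$, a function of the single variable $z_1$; its $\mathcal{H}_p(X)$-norm equals $(\int_{\mathbb{T}}\|\sum_k x_k z^k\|^p\,dz)^{1/p}$, and \eqref{ec3} becomes precisely \eqref{ec4}. The substantial direction is (d)$\Rightarrow$(b), which I expect to be the main obstacle, since a direct pointwise comparison of the two norms fails (for $p>2$ the one-dimensional norm is typically much larger than the Dirichlet one). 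Given $(x_n)_{n=1}^N$ involving only the primes $p_1,\dots,p_r$, write $F(z)=\sum_n x_n z^{\alpha(n)}$ on $\mathbb{T}^r$, so that $\|F\|_{H_p(\mathbb{T}^r,X)}=\|\sum_n x_n n^{-s}\|_{\mathcal{H}_p(X)}$. I would choose integers $d_1,\dots,d_r$ for which $n\mapsto m(n):=\sum_j d_j\alpha_j(n)$ is injective on $\{1,\dots,N\}$, and then exploit rotation invariance of Haar measure together with Fubini: averaging over the substitution $z_j\mapsto w^{d_j}z_j$ produces a point $z^*\in\mathbb{T}^r$ with
\begin{equation*}
\int_{\mathbb{T}}\Big\|\sum_n x_n (z^*)^{\alpha(n)} w^{m(n)}\Big\|^p\, dw \;\le\; \int_{\mathbb{T}^r}\|F\|^p \;=\; \Big\|\sum_n x_n n^{-s}\Big\|_{\mathcal{H}_p(X)}^p .
\end{equation*}

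The integrand on the left is a genuine one-variable analytic polynomial with distinct frequencies $m(n)$, so (d) applies (after padding with zero coefficients to fill the missing frequencies) and bounds $\mathbb{E}\|\sum_n\varepsilon_n x_n (z^*)^{\alpha(n)}\|$ by the displayed quantity. Finally the unimodular phases $(z^*)^{\alpha(n)}$ are stripped off by the Contraction principle \eqref{contraction}, giving $\mathbb{E}\|\sum_n\varepsilon_n x_n\|\lesssim\|\sum_n x_n n^{-s}\|_{\mathcal{H}_p(X)}$, which is (b). The delicate points I anticipate are guaranteeing that the frequencies $m(n)$ remain distinct and checking that this averaging genuinely transfers the polytorus norm to a one-dimensional norm with no loss; the phase removal via \eqref{contraction} is precisely what renders the final Rademacher average insensitive to the particular choice of $z^*$.
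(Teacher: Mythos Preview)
Your argument is correct, and the substantial step (d)$\Rightarrow$(b) follows the same transfer idea as the paper: inject the single Steinhaus variable via $z_j\mapsto w^{d_j}z_j$ so that the exponents $m(n)=\sum_j d_j\alpha_j(n)$ are distinct, apply (d), then remove the phases by the contraction principle. The paper takes $d_j=(m+1)^{j-1}$ and keeps the full integral over $z_2,\dots,z_N$, whereas you select a single good point $z^*$; both are fine.

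The genuine methodological difference lies in the cycle (a)$\Leftrightarrow$(b)$\Leftrightarrow$(c). The paper proves (a)$\Rightarrow$(b) by contraposition through a constructive gliding-hump argument (their Lemma~\ref{lema}): assuming no uniform constant exists, one builds a single sequence $(x_n)_n$ by stacking finite blocks with progressively worse RUC constants. You instead route through (a)$\Rightarrow$(c)$\Rightarrow$(b): for (a)$\Rightarrow$(c) you exploit the regularizations $D_\sigma$ (whose partial sums genuinely converge to $D_\sigma$ with $\|D_\sigma\|\le\|D\|$), feed them into the RUC inequality for the fixed sequence $(x_n)_n$, and pass to the limit; for (c)$\Rightarrow$(b) you invoke the closed graph theorem, using that the coefficient functionals are continuous on both $\mathcal H_p(X)$ and $\mathcal H_p^{rad}(X)$. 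The paper leaves (c)$\Rightarrow$(a) as ``straightforward'', which presumably hides the same closed graph step you make explicit. Your route is more abstract and avoids the auxiliary Lemma~\ref{lema}; the paper's route is self-contained and gives an explicit bad sequence when (b) fails. Both yield the full equivalence.
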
	

\begin{definition}
Given $p\geq 2$, we will say that a Banach space $X$ has the $ \mathcal{H}_p$ \textit{random convergence property} (or, in short, $X$ has $\mathcal{H}_p -RCP$ ) if $X$ satisfies any (and all) of the conditions in Proposition \ref{EqRCP}
\end{definition}

In order to prove Proposition \ref{EqRCP}, we need the following lemma.

\begin{lemma}
\label{lema}
Assume that there exist $M\in \mathbb{N}$ and $K>0$ such that $(y_n n^{-s})_{n=M+1}^{\infty}$ is $K-$RUC for every vector sequence $(y_n)_{n=M+1}^{\infty}$. Then $(y_n n^{-s})_{n\in\N}$ is $(M+K+KM)-$RUC for every vector sequence
$(y_n)_{n\in\N}\subseteq X$.

\end{lemma}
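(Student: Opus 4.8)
The plan is to reduce everything to a norm estimate for arbitrary finite vector sequences and then split the Rademacher sum at the threshold index $M$. Since the conclusion is quantified over \emph{every} vector sequence $(y_n)_n$, and since scalars $a_n$ appearing in the $RUC$ inequality can always be absorbed into the vectors (replacing $y_n$ by $a_n y_n$), it suffices to produce a constant $C=M+K+KM$ such that for every $N$ and every $(y_n)_{n=1}^N\subseteq X$ one has
\[
\mathbb{E}\Big\|\sum_{n=1}^N \varepsilon_n y_n n^{-s}\Big\|_{\Ha_p(X)}\le C\Big\|\sum_{n=1}^N y_n n^{-s}\Big\|_{\Ha_p(X)}.
\]
Write $D=\sum_{n=1}^N y_n n^{-s}$. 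First I would decompose
\[
\sum_{n=1}^N \varepsilon_n y_n n^{-s}=\sum_{n=1}^M \varepsilon_n y_n n^{-s}+\sum_{n=M+1}^N \varepsilon_n y_n n^{-s},
\]
and apply the triangle inequality inside the expectation, bounding $\mathbb{E}\|\cdot\|$ by the sum of the expectations of the head and the tail.

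For the head I would argue termwise. A single monomial satisfies $\|y_n n^{-s}\|_{\Ha_p(X)}=\|y_n\|_X$, while the contractivity of the coefficient operators $c_n$ (property \ref{coeff}) gives $\|y_n\|_X=\|c_n(D)\|_X\le \|D\|_{\Ha_p(X)}$. Hence, for every fixed choice of signs and therefore also in expectation,
\[
\mathbb{E}\Big\|\sum_{n=1}^M \varepsilon_n y_n n^{-s}\Big\|\le \sum_{n=1}^M \|y_n\|_X\le M\,\|D\|_{\Ha_p(X)}.
\]
For the tail I would invoke the hypothesis: since $(y_n n^{-s})_{n=M+1}^{\infty}$ is $K$-$RUC$ for every vector sequence, applying it to $(y_n)_{n=M+1}^N$ with unit scalars yields
\[
\mathbb{E}\Big\|\sum_{n=M+1}^N \varepsilon_n y_n n^{-s}\Big\|\le K\,\Big\|\sum_{n=M+1}^N y_n n^{-s}\Big\|_{\Ha_p(X)}.
\]
Writing the tail as $D$ minus the head and reusing the head bound gives $\big\|\sum_{n=M+1}^N y_n n^{-s}\big\|\le \|D\|+\sum_{n=1}^M\|y_n\|_X\le (1+M)\|D\|$, so the tail contributes at most $K(1+M)\|D\|=(K+KM)\|D\|$. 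Adding the two contributions produces exactly the constant $M+K+KM$, which is what I want.

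I do not expect a genuinely hard step here; the proof is a splitting argument glued together by the triangle inequality and the contractivity of coefficient functionals. The only points demanding care are the reduction that lets one ignore the scalars $a_n$ (legitimate precisely because the hypothesis ranges over all vector sequences) and the bookkeeping of the three constants so that the head term ($M$), the tail $RUC$ constant ($K$), and the correction from subtracting the head inside the tail estimate ($KM$) combine into the announced $M+K+KM$.
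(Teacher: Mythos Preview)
Your proposal is correct and follows essentially the same splitting argument as the paper: both bound the head termwise via the contractivity of the coefficient functionals $c_n$, apply the $K$-RUC hypothesis to the tail, and then control $\big\|\sum_{n=M+1}^N y_n n^{-s}\big\|$ by $(1+M)\|D\|$ using the same coefficient bound. Your absorption of the scalars $a_n$ into the vectors is a legitimate (and slightly streamlining) use of the universal quantifier over sequences, but otherwise the two proofs are the same.
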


\begin{proof}
 Let $(x_n)_{n}$ be a vector sequence in $X$. We start by proving that $(x_n n^{-s})_{n}$ is $(M+K+KM)-$RUC. Given $N\in\N$ and  and $(a_n)_{n=1}^N\subseteq\C$, we have
 \[\Big\|\sum_{n=1}^N a_n x_n n^{-s}\Big\|_{\mathcal{H}_p^{rad}(X)}\leq
 \sum_{n=1}^M\left\| a_n x_n n^{-s}\right\|_{\mathcal{H}_p^{rad}(X)}+\Big\|\sum_{n=M+1}^N a_n x_n n^{-s}\Big\|_{\mathcal{H}_p^{rad}(X)}.\]
Applying \ref{coeff} for $1\leq k\leq N$ we get
 \begin{align*}
     \left\| a_k x_k k^{-s}\right\|_{\mathcal{H}_p^{rad}(X)}
 &= \left\| a_k x_k \right\|_{X}
 = \Big\| c_k \bigg(\sum_{n=1}^N a_n x_n n^{-s}\bigg)\Big\|_{X}
 \\&\leq\Big\| \sum_{n=1}^N a_n x_n n^{-s}\Big\|_{\mathcal{H}_p(X)}.
 \end{align*}
 Therefore, we may deduce
 \[\Big\|\sum_{n=1}^N a_n x_n n^{-s}\Big\|_{\mathcal{H}_p^{rad}(X)}\leq
 M\Big\| \sum_{n=1}^N a_n x_n n^{-s}\Big\|_{\mathcal{H}_p(X)}+\Big\|\sum_{n=M+1}^N a_n x_n n^{-s}\Big\|_{\mathcal{H}_p^{rad}(X)}.\]
 On the other hand, since $(x_{n} n^{-s})_{n=M+1}^\infty$ is $K-$RUC by hypothesis, we get
 \begin{align*}
 \Big\|\sum_{n=M+1}^N a_n x_n n^{-s}\Big\|_{\mathcal{H}_p^{rad}(X)}
 &\leq K\Big\|\sum_{n=M+1}^N a_n x_n n^{-s}\Big\|_{\mathcal{H}_p(X)}
 \\ & \leq K\Big\|\sum_{n=1}^N a_n x_n n^{-s}\Big\|_{\mathcal{H}_p(X)}
 +K\Big\|\sum_{n=1}^M a_n x_n n^{-s}\Big\|_{\mathcal{H}_p(X)}
 \\ &\leq K\Big\|\sum_{n=1}^N a_n x_n n^{-s}\Big\|_{\mathcal{H}_p(X)}
 +K\sum_{n=1}^M\left\| a_n x_n n^{-s}\right\|_{\mathcal{H}_p(X)}
 \\ &\leq (K+KM)\Big\|\sum_{n=1}^N a_n x_n n^{-s}\Big\|_{\mathcal{H}_p(X)}.
 \end{align*}
 Joining the last two inequalities concludes the argument.
\end{proof}

\bigskip

\begin{proof}[Proof of Proposition \ref{EqRCP}]
For convenience we will prove first the equivalence $(b)\Leftrightarrow (d)$, and then we will prove $(a)\Rightarrow (b)\Rightarrow (c)\Rightarrow (a)$.

$(b)\Rightarrow(d)$:  Given $(x_n)_n \subseteq X$ we define

$$
\widetilde{x}_k  = \left\{
\begin{array}{c l}
 x_n &  k= 2^n \\
 \quad \\
  0 & k \not= 2^n\\
\end{array}
\right.
$$
Using Proposition \ref{rad} and \eqref{ec3} we have
$$	
	\mathbb{E} \Big\| \sum_{n=1}^N \varepsilon_n x_n \Big\| = \mathbb{E} \Big\| \sum_{k=1}^{2^N} \varepsilon_k \widetilde{x}_k \Big\| \sim
	\Big\|\sum_k \widetilde{x}_k k^{-s} \Big\|_{\mathcal{H}^{rad}_p (X)} \leq
	C \Big\|\sum_k \widetilde{x}_k k^{-s} \Big\|_{\mathcal{H}_p (X)},
$$	
by definition of the norm in $\mathcal{H}_p(X)$, the last term is equal to
\begin{align*}
	\bigg(\int_{\mathbb{T}^\N} \Big\| \sum_{\alpha\in \N_0^{(\N)}} \widetilde{x}_{n(\alpha)} z^{\alpha} \Big\|^p_X dz\bigg)^{1/p} = 	 \bigg(\int_{\mathbb{T}} \Big\| \sum_{\alpha} x_n z_1^n \Big\|^p_X dz_1\bigg)^{1/p}.
\end{align*}

$(d)\Rightarrow(b)$: We proceed as in \cite[Proposition 2.4]{CaDeSe1}. Recall that by definition of the $\Ha_p (X)$ norm we have
\begin{align*}
	\Big\| \sum_{n=1}^N x_n  n^{-s} \Big\|_{\mathcal{H}_p (X)} = \Big\| \sum_{\alpha\in  \N_0^{(\N)}} x_{n(\alpha)} z^{\alpha} \Big\|_{H_p (X)}.
\end{align*}
 Let $m$ be the maximum of all $\alpha_i$ such that $x_{n(\alpha)}$ is not zero.
For each $z_1 \in \T$ fixed, using a change of variables with $z_i'=z_i\cdot z_1^{(m+1)^{i-1} }$ for $2\leq i\leq N$, we have

\begin{multline*}
	\int_{\T ^{N-1}} \Big\| \sum_{\alpha\in \N_0^{(\N)}} x_{n(\alpha)} z_1^{\alpha_1} z_2^{\alpha_2} \dots z_N^{\alpha_N} \Big\|_X^p dz_2 \dots dz_N \\
	= \int_{\T ^{N-1}} \Big\| \sum_{\alpha\in \N_0^{(\N)}} x_{n(\alpha)} z_1^{\alpha_1} ( z_2 z_1^{m+1})^{\alpha_2} \dots (z_N z_1^{(m+1)^{N-1}})^{\alpha_N} \Big\|_X^p dz_2 \dots dz_N\\
 	= \int_{\T ^{N-1}} \Big\| \sum_{\alpha\in \N_0^{(\N)}} x_{n(\alpha)}z_1^{\alpha_1 + (m+1) \alpha_2 + \dots + (m+1)^{N-1} \alpha_N} z_2^{\alpha_2} \dots z_N^{\alpha_N}\Big\|_X^p dz_2 \dots dz_N. \\
\end{multline*}

Changing the order of integration we get
\begin{multline*}
\int\limits_{\T^N} \Big\| \sum_{\alpha\in  \N_0^{(\N)}} x_{n(\alpha)} z^{\alpha} \Big\|_X^p dz \\
= \int\limits_{\T ^{N-1}} \bigg( \int\limits_{\T} \Big\| \sum_{\alpha\in  \N_0^{(\N)}} (x_{n(\alpha)} z_2^{\alpha_2} \dots z_N^{\alpha_N}) z_1^{\alpha_1 + (m+1) \alpha_2 + \dots + (m+1)^{N-1}
 \alpha_N} \Big\|^p d z_1 \bigg)dz_2 \dots dz_N,
\end{multline*}
Since all the numbers of the form $\sum_{i=1}^N \alpha_i(m+1)^{i-1}$ appearing as powers of $z_1$ are different, we can apply $(d)$ and the contraction principle \eqref{contraction}  in the inner integral with $z_2 , \ldots, z_N$ fixed. We get
\begin{align*}
\int\limits_{\T ^N} \Big\| \sum_{\alpha\in  \N_0^{(\N)}} x_{n(\alpha)} z^{\alpha} \Big\|_X^p dz &\geq \frac{1}{C^p} \int\limits_{\T ^{N-1}} \mathbb{E}  \Big\| \sum_{\alpha\in  \N_0^{(\N)}} (x_{n(\alpha)} z_2^{\alpha_2} \dots z_N^{\alpha_N}) \varepsilon_{\alpha} \Big\|_X^p  dz_2 \dots dz_N \\
&\sim  \mathbb{E} \Big\|\sum_{\alpha\in  \N_0^{(\N)}} x_{n(\alpha)} \varepsilon_{\alpha} \Big\|_X \sim \Big\|\sum_{n=1}^N x_{n} n^{-s}\Big\|_{\mathcal H_p^{rad}(X)},
\end{align*}
where in the last step we used Proposition \ref{rad}.

 $(a)\Rightarrow(b)$: Assume $(b)$ does not hold for any $C\geq1$. Using Proposition \ref{rad}, Lemma \ref{lema} tells us that for any $M\in \N$ and any $K>0$, there is a vector sequence $(x_n)_{n \in \N}\subseteq X$  such that $(x_n n^{-s})_{n\geq M}$ is not $K-$RUC. Using this fact repeatedly for different values of $M$ and $K$, we will construct a vector sequence which contradicts $(a)$. Taking $M=M_0=0$ and $K=1$ we may deduce that there are $M_1\in \N$, $(x_n)_{n=1}^{M_1}\subseteq X$ and $(a_n)_{n=1}^{M_1}\subseteq \C$ such that
$$
\Big\|\sum_{n=1}^{M_1} a_n x_n n^{-s} \Big\|_{\mathcal{H}^{rad}_p (X)} > \Big\|\sum_{n=1}^{M_1} a_n x_n n^{-s} \Big\|_{\mathcal{H}_p (X)}.
$$
Proceeding inductively suppose we have defined $M_k\in \N$, $(x_n)_{n=1}^{M_k}\subseteq X$ and $(a_n)_{n=1}^{M_k}\subseteq \C$ so that
$$
\Big\|\sum_{n=M_{j-1}+1}^{M_j} a_n x_n n^{-s} \Big\|_{\mathcal{H}^{rad}_p (X)} > j\Big\|\sum_{n=M_{j-1}+1}^{M_j} a_n x_n n^{-s} \Big\|_{\mathcal{H}_p (X)},
$$
for every $1\leq j \leq k$.
Taking $M=M_k$ and $K=k+1$ we may deduce that there are $M_{k+1}\in \N$, $(x_n)_{n=M_k+1}^{M_{k+1}}\subseteq X$ and $(a_n)_{n=M_k+1}^{M_{k+1}}\subseteq \C$ such that
$$
\Big\|\sum_{n=M_{k}+1}^{M_{k+1}} a_n x_n n^{-s} \Big\|_{\mathcal{H}^{rad}_p (X)} > (k+1)\Big\|\sum_{n=M_{k}+1}^{M_{k+1}} a_n x_n n^{-s} \Big\|_{\mathcal{H}_p (X)}.
$$
Note that for the sequence $(x_n)_{n}\subseteq X$ thus defined it follows that $(x_n n^{-s})_{n} $ fails to be RUC. Hence, $(a)$ does not hold.
		
$(b)\Rightarrow(c)$: Assuming there is a constant $C>0$ such that \eqref{ec3} holds for Dirichlet polynomials, we prove that the same inequality is valid for every Dirichlet series in $\mathcal{H}_p (X)$. Fix $D\in\mathcal{H}_p (X)$ and $M\in\N$. As mentioned in \ref{coeff}, since Dirichlet polynomials are dense in $\mathcal{H}_p (X)$ there is a sequence of polynomials $(D_N)_{N}\subseteq \mathcal{H}_p (X)$ converging to $D$. In particular, we have that the coefficients $c_n(D_N)$ of $D_N$ converge to those of $D$ for every $n\in\N$. Thus, given $\varepsilon>0$ we may choose $N\in\N$ sufficiently large so that
\begin{align}
 \label{ec1}
 \left\|c_n(D-D_N)\right\|_X&<\frac{\varepsilon}{M},
 \shortintertext{for every $n\leq M$, and}
 \label{ec2}
 \left\|D_N-D\right\|_{\mathcal{H}_p (X)}&<\varepsilon.
\end{align}
Using \eqref{ec1} and the contraction principle \eqref{contraction} we get
\begin{align*}
 \Big\|\sum_{n=1}^{M} c_n(D) n^{-s}\Big\|_{\mathcal{H}^{rad}_p (X)}
 &\leq \Big\|\sum_{n=1}^{M} c_n(D-D_N) n^{-s}\Big\|_{\mathcal{H}^{rad}_p (X)}
 +\Big\|\sum_{n=1}^{M} c_n(D_N) n^{-s}\Big\|_{\mathcal{H}^{rad}_p (X)}
 \\ &\leq \Big\|\sum_{n=1}^{M} c_n(D_N) n^{-s}\Big\|_{\mathcal{H}^{rad}_p (X)} + \varepsilon \leq \left\|D_N\right\|_{\mathcal{H}^{rad}_p (X)} + \varepsilon.
\intertext{From the hypothesis and \eqref{ec2} we obtain}
\Big\|\sum_{n=1}^{M} c_n(D) n^{-s}\Big\|_{\mathcal{H}^{rad}_p (X)}
 &\leq C\left\|D_N\right\|_{\mathcal{H}_p (X)} + \varepsilon
 \leq C\left\|D\right\|_{\mathcal{H}_p (X)} + (C+1)\varepsilon.
\end{align*}
As $\varepsilon$ was arbitrary we have proven that
\begin{align*}
 \Big\|\sum_{n=1}^{M} c_n(D) n^{-s}\Big\|_{\mathcal{H}^{rad}_p (X)}
 \leq C\left\|D\right\|_{\mathcal{H}_p (X)},
\end{align*}
for every $M\in \N$. Finally invoking Corollary \ref{seryestarcoro}, we conclude that
\begin{align*}
 \left\|D\right\|_{\mathcal{H}^{rad}_p (X)}
 \leq C\left\|D\right\|_{\mathcal{H}_p (X)},
\end{align*}
for every $D\in \mathcal{H}_p (X)$.

$(c)\Rightarrow(a)$: This implication is straightforward, so the proof is complete.

\end{proof}

Theorem \ref{EqRCP} brings up a natural question: given a fixed sequence $(x_n)_n$, are conditions \eqref{ec3} and \eqref{ec4} equivalent? In other words, is the sequence $(x_n n^{-s})_n$  $RUC$ in $\Ha_p(X)$ if and only if $(x_n z^n)_n$ is $RUC$ in $H_p (X)$?
In the following examples we provide a negative answer to this question. In fact, we will see that none of the implications hold.

\begin{example}\label{ex:dirichletRUC}\textit{A sequence $(x_n)_n \subseteq X$ such that $(x_n n^{-s})_n$ is $RUC$ in $\Ha_2(X)$ but $(x_n z^n)_n$ fails to be $RUC$ in $H_2 (X)$.}
	
	Let $X$ be the Banach space $L_1(\mathbb{T}^2)$ and consider the sequence $(x_n)_n$ defined by
	$$
	x_n (w_1, w_2)= \left\{
	\begin{array}{c l}
	w_1^n &  \text{ if } n \text{ is prime;}\\
	\quad \\
	w_2^{2^n} & \text{otherwise.}\\
	\end{array}
	\right.
	$$
\end{example}

\begin{proof} First, we see that $(x_n n^{-s})_n$ is $RUC$ in $\Ha_2 (X)$. For $N \in \N$, let $m\in \N$ such that $p_m\leq N< p_{m+1}$, in other words, $\{p_1,\ldots,p_m\}$ is the set of prime numbers less than or equal to $N$. Set $M_N = \{ 1, \ldots, N \} \smallsetminus \{p_1,\ldots,p_m\}$. We have
\begin{align}
\label{ec5}
\mathbb{E}\Big\|\sum_{n=1}^N \varepsilon_n a_n x_n n^{-s}\Big\|_{\Ha_2 (X)} \leq  \mathbb{E}\Big\|\sum_{i=1}^m \varepsilon_{i} a_{p_i} x_{p_i} p_i^{-s}\Big\|_{\Ha_2 (X)} +  \mathbb{E}\Big\|\sum_{n \in M_N} \varepsilon_n a_n x_n n^{-s}\Big\|_{\Ha_2 (X)}.
\end{align}
We analyze both terms in the right hand side separately. First observe that the Bohr transform maps the terms $(p_i^{-s})_{i=1}^m$ to $m$ independent Steinhaus random variables $(z_i)_{i=1}^m$. 
Therefore, using Kahane's inequality \eqref{kahane}, the definition of the norm in $\Ha_2(X)$ and \eqref{contraction2}, we get
\begin{align*}
\mathbb{E}\Big\|\sum_{i=1}^m \varepsilon_{i} a_{p_i} x_{p_i} p_i^{-s}\Big\|_{\Ha_2 (X)} &\sim \bigg(\mathbb{E}\Big\|\sum_{i=1}^m \varepsilon_{i} a_{p_i} x_{p_i} p_i^{-s}\Big\|^2_{\Ha_2 (X)}\bigg)^{\frac12}\\
&=  \bigg(\mathbb{E}\int_{\mathbb T^m} \Big\|\sum_{i=1}^m \varepsilon_{i} a_{p_i} x_{p_i} z_i\Big\|^2_{X}dz_1\ldots dz_m \bigg)^{\frac12}\\
&\lesssim  \bigg(\int_{\mathbb T^m} \Big\|\sum_{i=1}^m a_{p_i} x_{p_i} z_i\Big\|^2_{X}dz_1\ldots dz_m \bigg)^{\frac12}.
\end{align*}
Now, observe that
$$
\int_{\mathbb T} x_n(w_1,w_2)dw_2=\left\{
	\begin{array}{c l}
	w_1^n &  \text{ if } n \text{ is prime;}\\
	\quad \\
	0 & \text{otherwise.}\\
	\end{array}
	\right.
$$
Hence, by Jensen's inequality we have
$$
\Big|\sum_{i=1}^m a_{p_i} w_1^{p_i} z_i\Big| =\Big|\int_{\mathbb T}\sum_{n=1}^N a_{n} x_n(w_1,w_2) z^{\alpha(n)}dw_2\Big| \leq \int_{\mathbb T}\Big|\sum_{n=1}^N a_{n} x_n(w_1,w_2) z^{\alpha(n)}\Big|dw_2.
$$

Therefore, it follows that
\begin{align*}
 \int_{\mathbb T^m} \Big\|\sum_{i=1}^m & a_{p_i} x_{p_i} z_i\Big\|^2_{X}dz_1\ldots dz_m = \int_{\mathbb T^m} \bigg(\int_{\mathbb T}\Big|\sum_{i=1}^m a_{p_i} w_1^{p_i} z_i\Big| dw_1\bigg)^2dz_1\ldots dz_m \\
 &\leq \int_{\mathbb T^m} \bigg(\int_{\mathbb T^2}\Big|\sum_{n=1}^N a_{n} x_n(w_1,w_2) z^{\alpha(n)}\Big| dw_1dw_2\bigg)^2dz_1\ldots dz_m \\
 &=\Big\|\sum_{n=1}^N a_n x_n n^{-s}\Big\|_{\Ha_2 (X)}^2.
\end{align*}

Thus, we have shown that
\begin{equation}\label{ec6}
\mathbb{E}\Big\|\sum_{i=1}^m \varepsilon_{i} a_{p_i} x_{p_i} p_i^{-s}\Big\|_{\Ha_2 (X)}  \lesssim \Big\|\sum_{n=1}^N a_n x_n n^{-s}\Big\|_{\Ha_2 (X)}.
\end{equation}

For the second term in \eqref{ec5}, observe that the variables $w_2^{2^n}$ behave as if they were independent Rademacher variables, since $2^n$ is a lacunary sequence \cite[Theorem 2.1]{pisier1978inegalites}. As it was done before, an unconditionality argument yields
\begin{equation}\label{ec7}
\mathbb{E}\Big\|\sum_{n \in M_N} \varepsilon_n a_n x_n n^{-s}\Big\|_{\Ha_2 (X)} \lesssim \Big\|\sum_{n=1}^N a_n x_n n^{-s}\Big\|_{\Ha_2 (X)}.
\end{equation}
Combining \eqref{ec6} and \eqref{ec7} with \eqref{ec5} we get the desired result.

It remains to see that $(x_n z^n)_n$ fails to be $RUC$ in $H_2 (X)$. The proof of this fact is inspired in that of \cite[Proposition 12.8]{rudin1960trigonometric}, and uses Green-Tao's theorem which states that the sequence of prime numbers contains arbitrarily long arithmetic progressions \cite{TEOTAO}. 

Assume that $(x_n z^n)_n$ is $RUC$. Given $N \in \N$ there exists      an arithmetic progression $A_N$ of length $N$ contained in the prime numbers. Consider the coefficients
$$
a_n = \left\{
\begin{array}{c l}
1 &   \text{if } n \in A_N \\
\quad \\
0 & \text{otherwise.}\\
\end{array}
\right.
$$
Since $(x_n z^n)_n$ is $RUC$ we get

\begin{align*}
\sqrt{N}=
&\mathbb{E}\Big\|\sum_{n \in A_N} \varepsilon_n w_1^n z^n\Big\|_{H_2 (X)}
\lesssim \Big\|\sum_{n \in A_N} w_1^n z^n\Big\|_{H_2 (X)} =  \Big\|\sum_{n \in A_N} w^n\Big\|_{L_1 (\T )} \sim \log N,
\end{align*}
where the $\log N$ term comes from the classical estimation of the $L_1-$norm of the Dirichlet kernel (see for example \cite[pp. 59-60]{mcl}). This leads to a contradiction since the inequality cannot hold for arbitrarily large $N$.
\end{proof}

\begin{example}\label{ex:powerRUC}\textit{A sequence $(x_n)_n \subseteq X$ such that $(x_n z^n)_n$ is $RUC$ in  $H_2 (X)$ but $(x_n n^{-s})_n$ fails to be $RUC$ in $\Ha_2(X)$.}

	Let $X$ be the Banach space $L_1(\mathbb{T}^2)$. Define $(x_n)_n$ by
	$$
	x_n (w_1, w_2)= \left\{
	\begin{array}{c l}
	w_1^k &  \text{ if } n= 2^{k};\\
	\quad \\
	w_2^{2^n} & \text{otherwise.}\\
	\end{array}
	\right.
	$$
\end{example}

\begin{proof} We omit the proof of the first assertion since it is similar to the previous example.
Assume that $(x_n n^{-s})_n$ is $RUC$. In particular, the $RUC$ inequality holds for sequences supported in the powers of two. Recall that the Bohr transform maps $\left(2^k\right)^{-s}$ to $z_1^k$. In other words, we have


\begin{equation}
\mathbb{E}\Big\|\sum_{n=1}^N \varepsilon_{n} a_{n} w_1^n z_1^n\Big\|_{H_2 (X)} \lesssim \Big\|\sum_{n=1}^N  a_{n} w_1^n z_1^n\Big\|_{H_2 (X)}.
\end{equation}
A quick computation leads to  a contradiction since
\begin{equation}
\bigg(\sum_{n=1}^N \left| a_ n\right|^2  \bigg)^{1/2}= \Big\|\sum_{n=1}^N a_n w^n\Big\|_{L_2(\T)} \lesssim \Big\|\sum_{n=1}^N a_n w^n\Big\|_{L_1(\T)}
\end{equation}
cannot hold.
\end{proof}

An analogous result to Proposition \ref{EqRCP} holds replacing the $RUC$ property by $RUD$, leading to the corresponding definition of $\mathcal{H}_p -RDP$. We state this result without proof, as it follows the same arguments.

\begin{proposition}\label{EqRDP}
	Let $X$ be a Banach space and $p \leq 2$. The following statements are equivalent:
	\begin{enumerate}[label=\rm{(\alph*)}]
		\item $(x_n n^{-s})_{n} $ is RUD in $ \mathcal{H}_p (X)$ for every $(x_n)_{n} \subset X$.
		\item There is $C \ge 1$ such that for every $N \in \mathbb{N}$ and $(x_n)^N_{n=1}$ we have
		\begin{align}
		\Big\|\sum_{n=1}^N x_n n^{-s} \Big\|_{\mathcal{H}_p (X)}\leq C \Big\|\sum_{n=1}^N x_n n^{-s} \Big\|_{\mathcal{H}^{rad}_p (X)}.
		\end{align}
		\item The following inclusion holds:
		$$ \mathcal{H}^{rad}_p (X) \subseteq \mathcal{H}_p (X) . $$
		\item There is $C\geq 1$ such that for every $N \in \N$ and $(x_n)^N_{n=1}$ we have
		\begin{equation}
		\bigg( \int_{\mathbb{T}} \Big\| \sum_{n=1}^N x_n z^n \Big\|^p dz \bigg)^{1/p}\leq C \mathbb{E} \Big\| \sum_{n=1}^N \varepsilon_n x_n \Big\| .
		\end{equation}
	\end{enumerate}
\end{proposition}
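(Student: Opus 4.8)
The plan is to mirror the proof of Proposition~\ref{EqRCP} verbatim, reversing every inequality, and to follow the same route: first establish $(b)\Leftrightarrow(d)$, and then close the cycle $(a)\Rightarrow(b)\Rightarrow(c)\Rightarrow(a)$. As in the $RUC$ case, the restriction on $p$ plays no role in the equivalences themselves (it only delimits the regime where the property is of interest), so I will not invoke $p\le 2$ anywhere.

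For $(b)\Rightarrow(d)$ I would reuse the ``spreading onto the powers of two'' device: set $\widetilde x_{2^n}=x_n$ and $\widetilde x_k=0$ otherwise. Since $\alpha(2^n)=(n,0,\dots)$, Bohr's transform sends $\sum_k\widetilde x_kk^{-s}$ to the one-variable polynomial $\sum_n x_nz_1^n$, whence $\|\sum_k\widetilde x_k k^{-s}\|_{\mathcal H_p(X)}=(\int_\T\|\sum_n x_nz_1^n\|^p\,dz_1)^{1/p}$, while Proposition~\ref{rad} gives $\|\sum_k\widetilde x_kk^{-s}\|_{\mathcal H_p^{rad}(X)}\sim\mathbb E\|\sum_n\varepsilon_n x_n\|$; feeding these into $(b)$ produces exactly $(d)$. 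For the converse $(d)\Rightarrow(b)$ I would reproduce the change of variables $z_i'=z_iz_1^{(m+1)^{i-1}}$, collapsing $\int_{\T^N}\|\sum_\alpha x_{n(\alpha)}z^\alpha\|^p$ into an iterated integral whose inner integral in $z_1$ has the form $\int_\T\|\sum_\alpha y_\alpha z_1^{k_\alpha}\|^p\,dz_1$ with pairwise distinct exponents $k_\alpha$. Now $(d)$, applied pointwise in $(z_2,\dots,z_N)$, bounds that inner integral \emph{from above} by $C^p\,\mathbb E\|\sum_\alpha\varepsilon_\alpha y_\alpha\|^p$; after swapping the order of integration I would strip the unimodular factors $z_2^{\alpha_2}\cdots z_N^{\alpha_N}$ by the contraction principle \eqref{contraction} used in both directions (licit since they have modulus one uniformly in the $z_i$), and conclude with Proposition~\ref{rad} that $\|\sum x_nn^{-s}\|_{\mathcal H_p(X)}\lesssim C\,\|\sum x_nn^{-s}\|_{\mathcal H_p^{rad}(X)}$.

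For $(a)\Rightarrow(b)$ I would first record the $RUD$-counterpart of Lemma~\ref{lema}: if for some $M$ and $K$ every tail system $(y_nn^{-s})_{n>M}$ is $K$-$RUD$, then every $(y_nn^{-s})_{n}$ is $c(M,K)$-$RUD$. Its proof copies that of Lemma~\ref{lema} once one notes that the coefficient functionals $c_k$ are contractive not only on $\mathcal H_p(X)$ but also on $\mathcal H_p^{rad}(X)$: indeed $\|x_k\|_X=\|c_k(\sum_n r_n(t)x_nn^{-s})\|_X\le\|\sum_n r_n(t)x_nn^{-s}\|_{\mathcal H_p(X)}$ for every $t$, and one integrates in $t$. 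Assuming $(b)$ fails for every $C$, this lemma yields, for each $k$, a finite block $(x_n,a_n)_{M_{k-1}<n\le M_k}$ violating $k$-$RUD$; concatenating the blocks gives a single sequence for which $(x_nn^{-s})_n$ is not $RUD$, contradicting $(a)$, exactly as in the inductive construction of Proposition~\ref{EqRCP}.

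The step $(b)\Rightarrow(c)$ is where the argument genuinely departs from the $RUC$ case, and I expect it to be the main obstacle: one cannot approximate by polynomials dense in $\mathcal H_p(X)$ as before, because here the hypothesis controls the $\mathcal H_p$-norm by the $\mathcal H_p^{rad}$-norm while the datum lives in $\mathcal H_p^{rad}(X)$. Instead, given $D=\sum_n x_nn^{-s}\in\mathcal H_p^{rad}(X)$ I would consider its partial sums $D_M=\sum_{n\le M}x_nn^{-s}$. Since $\mathcal H_p^{rad}(X)=Rad(X)$ up to equivalence (Proposition~\ref{rad}) and membership in $Rad(X)$ means $\sum_n r_nx_n$ converges in $L_1([0,1],X)$, the differences satisfy $\|D_M-D_{M'}\|_{\mathcal H_p^{rad}(X)}\sim\mathbb E\|\sum_{M'<n\le M}\varepsilon_nx_n\|\to0$, so $(D_M)_M$ is Cauchy in $\mathcal H_p^{rad}(X)$; applying the finite inequality $(b)$ to $D_M-D_{M'}$ makes $(D_M)_M$ Cauchy in $\mathcal H_p(X)$ as well, hence convergent there to some $\widetilde D$, and continuity of the coefficient functionals (item~\ref{coeff}) forces $c_n(\widetilde D)=x_n$, i.e.\ $\widetilde D=D\in\mathcal H_p(X)$ with $\|D\|_{\mathcal H_p(X)}\le C\|D\|_{\mathcal H_p^{rad}(X)}$. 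Finally $(c)\Rightarrow(a)$ is immediate: the closed graph theorem upgrades the set-theoretic inclusion $\mathcal H_p^{rad}(X)\subseteq\mathcal H_p(X)$ to a bounded one, and restricting the resulting estimate to Dirichlet polynomials is precisely the $C$-$RUD$ inequality for every fixed sequence.
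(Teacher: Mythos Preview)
Your proposal is correct and matches the paper's own treatment, which simply states that the result ``follows the same arguments'' as Proposition~\ref{EqRCP}. Your observation that $(b)\Rightarrow(c)$ is the one step not obtained by pure reversal---because density must now be argued in $\mathcal H_p^{rad}(X)$ via the identification with $Rad(X)$ (Proposition~\ref{rad}) rather than in $\mathcal H_p(X)$---is a valid clarification of what ``the same arguments'' actually entails, and your Cauchy-sequence argument there is exactly right.
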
	

\begin{definition}
Given $p \leq 2$, we will say that a Banach space $X$ has the $ \mathcal{H}_p$ \textit{random divergence property} (or, in short, $X$ has $\mathcal{H}_p -RDP$ ) if $X$ satisfies any (and all) of the conditions in Proposition \ref{EqRDP}
\end{definition}

\begin{remark}
	Both examples \ref{ex:dirichletRUC} and \ref{ex:powerRUC} actually work if we consider $x_n \in L_r (\T ^2)$ for $1 < r < 2$ instead of $L_1 (\T ^2)$. Also, the dual statements involving $RUD$ may be proven with the same tools and taking $2< r < \infty$.
\end{remark}

\section{The role of type and cotype}\label{sec-typecotype}

In this section we show that $\mathcal{H}_2 -RCP$ and $\mathcal{H}_2 -RDP$ are equivalent, respectively, to type 2 and cotype 2. The proof is based on the work of Arendt and Bu \cite[Theorem~1.5]{Bu}. We also consider the case $p\ne 2$. For the definition and general properties of type and cotype we refer to \cite[Chapter~11]{DJT}.

\begin{theorem}\label{T2implicaH2RCP} Given a Banach space $X$ the following statements hold:
	\begin{enumerate}[label=\rm{(\roman*)}]
		\item $X$ has type $2$ if and only if it has the $\mathcal{H}_2 -RCP$;
		\item $X$ has cotype $2$ if and only if it has the $\mathcal{H}_2 -RDP$.
	\end{enumerate}
\end{theorem}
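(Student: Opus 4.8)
The plan is to derive both equivalences from the reformulations already established in Propositions~\ref{EqRCP} and~\ref{EqRDP}, which have done the entire job of reducing $\mathcal{H}_2$-RCP and $\mathcal{H}_2$-RDP to a comparison, \emph{on a single circle}, between the analytic trigonometric system and a Rademacher average. Specializing condition (d) of Proposition~\ref{EqRCP} to $p=2$, the space $X$ has $\mathcal{H}_2$-RCP if and only if there is $C>0$ such that
\[
\mathbb{E}\Big\|\sum_{n=1}^N \varepsilon_n x_n\Big\| \le C\Big(\int_{\T}\Big\|\sum_{n=1}^N x_n z^n\Big\|^2\,dz\Big)^{1/2}
\]
for every finite $(x_n)\subset X$; dually, by Proposition~\ref{EqRDP}, $X$ has $\mathcal{H}_2$-RDP if and only if the reverse inequality holds. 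Thus the theorem is reduced to showing that the first inequality characterizes type $2$ and its reverse characterizes cotype $2$.

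These are exactly the characterizations supplied by the theorem of Arendt and Bu \cite[Theorem 1.5]{Bu}: for a Banach space $X$, type $2$ is equivalent to the existence of $C>0$ with $\big(\mathbb{E}\|\sum_n\varepsilon_n x_n\|^2\big)^{1/2}\le C\big(\int_{\T}\|\sum_n x_n z^n\|^2\,dz\big)^{1/2}$, while cotype $2$ is equivalent to the reverse estimate. Passing between the first moment $\mathbb{E}\|\sum\varepsilon_n x_n\|$ appearing in condition (d) and the second moment used by Arendt--Bu via Kahane's inequality \eqref{kahane}, the RCP inequality coincides with the type $2$ characterization and the RDP inequality with the cotype $2$ one. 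Since each characterization is an ``if and only if'', both implications in (i) (namely type $2\Rightarrow\mathcal{H}_2$-RCP and $\mathcal{H}_2$-RCP$\Rightarrow$ type $2$), and likewise both implications in (ii), are obtained simultaneously.

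Two minor points deserve care when quoting \cite[Theorem 1.5]{Bu}. First, condition (d) involves the one-sided analytic system $(z^n)_{n\ge 1}$, whereas the trigonometric characterization may be phrased for a differently indexed system; since the Rademacher side is insensitive to the labelling of the vectors and the $L_2(\T,X)$-norm is rotation invariant, a harmless shift of indices reconciles the two and the equivalence constants stay uniform in $N$. Second, one must check that the normalization of the inequalities matches that in Propositions~\ref{EqRCP} and~\ref{EqRDP}, which is immediate after the Kahane step.

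The main obstacle is concentrated entirely in the Arendt--Bu inequality itself, i.e.\ in the nontrivial fact that domination of a Rademacher sum by the $L_2(\T,X)$-norm of the associated analytic polynomial is equivalent to type $2$ (and the reverse to cotype $2$). This cannot be routed through the naive bound by $\big(\sum\|x_n\|^2\big)^{1/2}$: the quantities $\mathbb{E}\|\sum\varepsilon_n x_n\|$ and $\big(\int_{\T}\|\sum x_n z^n\|^2\,dz\big)^{1/2}$ are in general mutually incomparable, and neither is comparable to $\big(\sum\|x_n\|^2\big)^{1/2}$ for an arbitrary space; for instance, taking $x_n=e_n$ in $\ell_\infty^N$ makes $\big(\sum\|x_n\|^2\big)^{1/2}=\sqrt{N}$ while both of the former quantities equal $1$. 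The real content, namely selecting the extremal test configurations through which the trigonometric system detects type or cotype $2$, is precisely what \cite[Theorem 1.5]{Bu} provides, and I would invoke it as a black box once the matching of hypotheses has been verified.
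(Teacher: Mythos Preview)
Your reduction via Propositions~\ref{EqRCP} and~\ref{EqRDP} to the single-circle inequality, followed by a direct appeal to Arendt--Bu, is correct and is precisely the strategy the paper announces. The difference is that the paper does not invoke \cite[Theorem~1.5]{Bu} as a black box. For the implication type~$2\Rightarrow\mathcal{H}_2$--RCP it uses a genuinely different argument: it passes through the $2$-summing norms of the operators $T:\ell_2^m\to X$, $e_n\mapsto x_n$, and $S:L_2(\T^\N)\to X$, $z^\alpha\mapsto x_{n(\alpha)}$, together with the Rademacher/Gaussian comparison and the identity $\pi_2(T^*)=\pi_2(S^*)$. For the converse $\mathcal{H}_2$--RCP $\Rightarrow$ type~$2$ it reproduces the Arendt--Bu mechanism in full (disjoint normalized $L_2$ functions, trigonometric approximation, shifting to non-overlapping positive frequencies, then averaging via Gaussians), and along the way needs the auxiliary Theorem~\ref{teo3} to secure finite cotype so that Rademacher and Gaussian averages are comparable. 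Your route is shorter and legitimate provided \cite[Theorem~1.5]{Bu} delivers exactly the two-sided characterization you quote; the paper's route is self-contained and exposes the mechanism, with one implication argued by an entirely different technique.
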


Recalling Kwapie\'n's characterization of Hilbert spaces and joining both theorems we arrive at the following conclusion.
\begin{corollary}\label{UniffHilbert}
 For a Banach space $X$ we have that $\Ha_p(X) = \Ha_p^{rad}(X)$ if and only if $p=2$ and $X$ is a Hilbert space.
\end{corollary}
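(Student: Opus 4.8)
The plan is to read the corollary off Theorem~\ref{T2implicaH2RCP} together with Kwapie\'n's characterization of Hilbert spaces, after first pinning the exponent down to $p=2$. Throughout I interpret the equality $\mathcal{H}_p(X)=\mathcal{H}_p^{rad}(X)$ as equality with equivalent norms: since both are Banach spaces on which every coefficient functional $c_n$ is continuous (contractive on $\mathcal{H}_p(X)$ by \ref{coeff}, and bounded on $\mathcal{H}_p^{rad}(X)=Rad(X)$ by Proposition~\ref{rad}), the identity map between them has closed graph, so set-theoretic equality already forces the two norms to be equivalent.

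For the easy implication, suppose $p=2$ and $X$ is a Hilbert space. Then $X$ has type $2$ and cotype $2$, so Theorem~\ref{T2implicaH2RCP}(i) gives the $\mathcal{H}_2$-RCP and hence the bounded inclusion $\mathcal{H}_2(X)\subseteq\mathcal{H}_2^{rad}(X)$ (condition (c) of Proposition~\ref{EqRCP}), while Theorem~\ref{T2implicaH2RCP}(ii) gives the $\mathcal{H}_2$-RDP and hence $\mathcal{H}_2^{rad}(X)\subseteq\mathcal{H}_2(X)$. Combining the two bounded inclusions yields $\mathcal{H}_2(X)=\mathcal{H}_2^{rad}(X)$.

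For the converse, assume $\mathcal{H}_p(X)=\mathcal{H}_p^{rad}(X)$. I would first force $p=2$ by a reduction to scalars that works for \emph{every} $X$. Fix a unit vector $x_0\in X$ and, for a scalar Dirichlet polynomial $D=\sum_n a_n n^{-s}$, consider $\widetilde{D}=\sum_n a_n x_0\, n^{-s}$. Because $\|x_0\, g\|_X=|g|$ for any scalar function $g$, the assignment $D\mapsto\widetilde{D}$ is isometric from $\mathcal{H}_p(\mathbb{C})$ into $\mathcal{H}_p(X)$; using the definition of the $\mathcal{H}_p^{rad}$ norm, the same assignment is isometric from $\mathcal{H}_p^{rad}(\mathbb{C})$ into $\mathcal{H}_p^{rad}(X)$. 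Therefore the assumed norm equivalence restricts to $\|D\|_{\mathcal{H}_p(\mathbb{C})}\sim\mathbb{E}\|\sum_n\varepsilon_n a_n n^{-s}\|_{\mathcal{H}_p(\mathbb{C})}$ for all finite scalar sequences, which says precisely that $(n^{-s})_n$ is both RUC and RUD, i.e.\ unconditional, in $\mathcal{H}_p(\mathbb{C})$. This contradicts the classical fact recalled in the Introduction (from \cite[Proposition 4]{CaDeSe1}) that $(n^{-s})_n$ is unconditional only when $p=2$; hence $p=2$. Now the equality $\mathcal{H}_2(X)=\mathcal{H}_2^{rad}(X)$ supplies both bounded inclusions, which by Theorem~\ref{T2implicaH2RCP} mean that $X$ has type $2$ and cotype $2$, so Kwapie\'n's theorem identifies $X$ as a Hilbert space.

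The genuinely delicate point is the step forcing $p=2$: one must exclude equality for \emph{all} Banach spaces, not merely the non-Hilbertian ones, and it is the one-dimensional reduction through a single unit vector that makes this uniform and reduces it to the already-known scalar obstruction. Everything else is a bookkeeping assembly of Theorem~\ref{T2implicaH2RCP} and Kwapie\'n's theorem.
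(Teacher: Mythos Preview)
Your proof is correct and follows essentially the same strategy as the paper's: reduce to the scalar case to force $p=2$, then invoke Theorem~\ref{T2implicaH2RCP} and Kwapie\'n. The paper's argument is extremely terse (``We may deduce from the scalar case that $p$ must be equal to $2$''), whereas you spell out the one-dimensional embedding via a fixed unit vector and add the closed graph remark; these are welcome details but not a different route.
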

\begin{proof}
	We may deduce from the scalar case that $p$ must be equal to $2$. Furthermore, from Theorem \ref{T2implicaH2RCP} we obtain that $X$ has type and cotype $2$ and therefore it is a Hilbert space. The converse is straightforward.
\end{proof}

Regarding the case where $p>2$, we can apply Theorem \ref{T2implicaH2RCP} to get that type $2$ implies $\Ha_p -RCP$. Whether or not the converse holds still eludes us. However, a slightly weaker result can be established. \begin{theorem}\label{teo3}
	If $X$ has the $\mathcal{H}_p$ random convergence property for some $2 \le p < \infty $, then
	$$\sup\{r:\,X\textrm{ has type }r\}=2.$$
\end{theorem}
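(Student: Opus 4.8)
The plan is to argue by contraposition, producing an explicit family of Dirichlet polynomials that violates the $\mathcal H_p$-RCP whenever $X$ fails to have type arbitrarily close to $2$. Write $s=\sup\{r:\,X\text{ has type }r\}$. Since no Banach space has type larger than $2$ we always have $s\le 2$, so it suffices to rule out $s<2$. Assume $s<2$. By the Maurey--Pisier theorem, $\ell_s$ is finitely representable in $X$: for every $N$ and every $\delta>0$ there is a subspace $E\subseteq X$ and a linear isomorphism $T\colon \ell_s^N\to E$ with $\|u\|\le\|Tu\|\le(1+\delta)\|u\|$. Such a near-isometry preserves, up to the factor $(1+\delta)$, both $\mathbb E\|\sum_n\varepsilon_n x_n\|$ and the quantity $\big(\int_{\mathbb T}\|\sum_n x_n z^n\|^p\,dz\big)^{1/p}$ (the right-hand side is unaffected because each $z^n$ is a scalar and $T$ acts linearly on the coefficients). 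Hence it is enough to build a bad family inside $\ell_s^N$ and transplant it through $T$, and then invoke condition (d) of Proposition \ref{EqRCP}.

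The family I would use is an ``$\ell_s$-valued shifted Dirichlet kernel''. Fix $N$, put $w_j=e^{2\pi i j/N}$ for $1\le j\le N$, and set $x_n=(w_j^{\,n})_{j=1}^N\in\ell_s^N$ for $1\le n\le N$, so that $\sum_{n=1}^N x_n z^n=\big(D_N(w_j z)\big)_{j=1}^N$ with $D_N(u)=\sum_{n=1}^N u^n$. Since every entry satisfies $|x_{n,j}|=1$, Khintchine's inequality applied coordinatewise in $\ell_s$ (together with Kahane's inequality) gives
$$\mathbb E\Big\|\sum_{n=1}^N\varepsilon_n x_n\Big\|_{\ell_s^N}\sim\Big(\sum_{j=1}^N\Big(\sum_{n=1}^N 1\Big)^{s/2}\Big)^{1/s}=N^{\frac1s+\frac12}.$$
For the other side I would discard the integral altogether and bound $\big(\int_{\mathbb T}\|\sum_n x_n z^n\|_{\ell_s^N}^p\big)^{1/p}$ by its supremum over $z$, namely by $\sup_{z}\big(\sum_{j=1}^N|D_N(w_j z)|^s\big)^{1/s}$. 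The entire argument then rests on the uniform estimate
$$\sup_{z\in\mathbb T}\ \sum_{j=1}^N|D_N(w_j z)|^{s}\ \lesssim\ N^{s}.$$
Granting it, condition (d) would force $N^{\frac1s+\frac12}\lesssim N$, i.e. $N^{\frac1s-\frac12}\lesssim 1$ uniformly in $N$, which is impossible as $N\to\infty$ because $s<2$. This contradiction yields $s=2$.

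The hard part is precisely this kernel bound. Writing $z=e^{2\pi i t}$ and using $|D_N(e^{2\pi i\theta})|=|\sin(\pi N\theta)|/|\sin(\pi\theta)|$, the periodicity $\sin(\pi Nt+\pi j)=\pm\sin(\pi Nt)$ gives
$$\sum_{j=1}^N|D_N(w_j z)|^{s}=|\sin(\pi N t)|^{s}\sum_{j=1}^N\frac{1}{|\sin\pi(t+j/N)|^{s}}.$$
The sample points $\{t+j/N\}$ form an arithmetic progression of step $1/N$, so the inner sum is comparable to $\sum_k\big(\mathrm{dist}(t+k/N,\mathbb Z)\big)^{-s}$. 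The delicate mechanism is that when some sample $t+j_0/N$ approaches an integer — making one denominator blow up — the prefactor $|\sin\pi Nt|^{s}$ vanishes at exactly the matching rate, so the nearest term stays $\sim N^{s}$, while the remaining terms sum to $\lesssim N^{s}\sum_{k\ge1}k^{-s}=O(N^{s})$ since $s>1$. I would make this rigorous by isolating the lattice point closest to $t$ and controlling the tail by comparison with $\sum_k k^{-s}$. The only case requiring a separate remark is $s=1$ (which can occur only if $X$ has trivial type), where $\sum_k k^{-1}$ introduces an extra $\log N$; the resulting ratio $\sim N^{1/2}/\log N$ still diverges, so the contradiction survives. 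Finally, I would note that the transplantation via $T$ transfers both displayed estimates to $X$ with only a $(1+\delta)$ loss, completing the contrapositive.
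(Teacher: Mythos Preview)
Your argument is correct and takes a genuinely different route from the paper's. Both proofs start with Maurey--Pisier to obtain that $\ell_s$ is finitely representable in $X$ when $s=\sup\{r:X\text{ has type }r\}<2$, but they diverge from there. The paper passes from $\ell_s$ to $L_s(\mathbb T^{\mathbb N})$ (using that $L_s$ is finitely representable in $\ell_s$), invokes the preceding proposition showing that $L_r(\mathbb T^{\mathbb N})$ fails the $\mathcal H_p$--RCP for every $1\le r<2$, and concludes by observing that the $\mathcal H_p$--RCP is a local property. You instead stay inside $\ell_s^N$ and build an explicit violating family directly: the vector-valued Dirichlet kernel $x_n=(w_j^{\,n})_{j=1}^N$ sampled at the $N$-th roots of unity, checked against condition~(d) of Proposition~\ref{EqRCP}. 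Your Rademacher estimate $\mathbb E\|\sum\varepsilon_n x_n\|_{\ell_s^N}\sim N^{1/s+1/2}$ via Kahane plus coordinatewise Khintchine is correct, and so is the kernel bound $\sup_{t}\sum_{j=1}^N|D_N(e^{2\pi i(t+j/N)})|^{s}\lesssim N^{s}$ for $s>1$: the factorization through $|\sin\pi Nt|^{s}$ and the tail comparison with $\sum_{k\ge1}k^{-s}$ are exactly the right mechanism, and the $\log N$ correction at $s=1$ is indeed harmless for the contradiction. The paper's route is structurally shorter because it recycles a result already proved, at the price of the extra finite-representability step $L_s\hookrightarrow\ell_s$; your route is more self-contained and elementary, never leaving $\ell_s^N$, at the price of the Dirichlet-kernel computation.
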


For convenience we start by analyzing the  $\Ha_p-RCP$ for the spaces $L_r (\T^{\N})$.

\begin{proposition}
If $ 2 \le r < \infty$, the space $L_r (\mathbb{T}^{\mathbb{N}})$ has the $\mathcal{H}_p -RCP$ for every $2 \le p < \infty $. On the other hand if $ 1\le r < 2$, the space $L_r (\mathbb{T}^{\mathbb{N}})$ does not have the $\mathcal{H}_p -RCP$ for any $2 \le p < \infty $.
\end{proposition}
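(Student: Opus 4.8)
The plan is to test condition (d) of Proposition~\ref{EqRCP} (the inequality~\eqref{ec4}), verifying it for $r\ge 2$ and violating it for $1\le r<2$.

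For the positive part ($2\le r<\infty$) I would argue as follows. It is classical that $L_r(\mu)$ has type $2$ whenever $r\ge 2$ (see \cite[Chapter~11]{DJT}), so in particular $X=L_r(\mathbb{T}^{\mathbb{N}})$ has type $2$; by Theorem~\ref{T2implicaH2RCP}(i) this is equivalent to the $\mathcal{H}_2$-RCP, that is, to~\eqref{ec4} with exponent $p=2$. Since $\mathbb{T}$ carries a probability measure, the map $q\mapsto\big(\int_{\mathbb{T}}\|\cdot\|^q\big)^{1/q}$ is nondecreasing, so
$$\Big(\int_{\mathbb{T}}\Big\|\sum_{n=1}^N x_n z^n\Big\|^2\,dz\Big)^{1/2}\le \Big(\int_{\mathbb{T}}\Big\|\sum_{n=1}^N x_n z^n\Big\|^p\,dz\Big)^{1/p}$$
for every $p\ge 2$. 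Hence the $p=2$ instance of~\eqref{ec4} implies the general one with the same constant, and $L_r(\mathbb{T}^{\mathbb{N}})$ has the $\mathcal{H}_p$-RCP for all $p\ge 2$. This is precisely the implication ``type $2\Rightarrow\mathcal{H}_p$-RCP'' noted before Theorem~\ref{teo3}; alternatively one combines $\mathcal{H}_2(X)\subseteq\mathcal{H}_2^{rad}(X)$ with the fact, from Proposition~\ref{rad}, that $\mathcal{H}_p^{rad}(X)=Rad(X)$ does not depend on $p$.

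For the negative part ($1\le r<2$) the plan is to exhibit a single family of test vectors making the ratio in~\eqref{ec4} blow up. I would take $x_n=w_1^n\in L_r(\mathbb{T}^{\mathbb{N}})$, the $n$-th power of the first coordinate, for $1\le n\le N$, and compute both sides. On the left, for each fixed $w_1$ the classical Khintchine inequality gives $\mathbb{E}\big|\sum_{n=1}^N\varepsilon_n w_1^n\big|^r\asymp\big(\sum_{n=1}^N|w_1^n|^2\big)^{r/2}=N^{r/2}$; integrating in $w_1$ and using Kahane's inequality~\eqref{kahane} to pull the $r$-th root outside the expectation yields $\mathbb{E}\big\|\sum_{n=1}^N\varepsilon_n x_n\big\|_{L_r}\asymp N^{1/2}$. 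On the right, for fixed $z\in\mathbb{T}$ the function $\sum_{n=1}^N x_n z^n$ depends only on $w_1$ and equals $\sum_{n=1}^N(zw_1)^n$; since $w_1\mapsto zw_1$ preserves Haar measure, its $L_r$-norm is independent of $z$. The $z$-integral therefore collapses and the whole right-hand side of~\eqref{ec4} equals $\big\|\sum_{n=1}^N w^n\big\|_{L_r(\mathbb{T})}$, for every $p$ at once.

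Finally I would invoke the order of growth of this one-sided Dirichlet-type kernel: a direct estimate of $\int_{\mathbb{T}}\big|\sum_{n=1}^N w^n\big|^r$ gives $\big\|\sum_{n=1}^N w^n\big\|_{L_r(\mathbb{T})}\asymp N^{1-1/r}$ for $1<r<2$ and $\asymp\log N$ for $r=1$ (in the spirit of the Dirichlet-kernel computation in \cite[pp.~59--60]{mcl}), which in every case is $o(N^{1/2})$. Thus the ratio of the two sides of~\eqref{ec4} is $\gtrsim N^{1/r-1/2}\to\infty$ (respectively $\sqrt N/\log N\to\infty$), so no finite constant works and the $\mathcal{H}_p$-RCP fails for every $p\ge 2$. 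I expect the only delicate point, and thus the main obstacle, to be this sharp sub-$\sqrt N$ estimate for the $L_r$-norm of the kernel, which is exactly what distinguishes $r<2$ from the borderline $r=2$; the rotation trick $w_1\mapsto zw_1$ that trivialises the $z$-average is elementary but essential, since it is what makes the failure simultaneous in all $p\ge 2$.
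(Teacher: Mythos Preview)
Your argument is correct, but both halves take a different route from the paper's.

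For the positive part, the paper gives a direct computation: using Kahane and Khintchine it identifies $\|\sum f_n n^{-s}\|_{\mathcal{H}_2^{rad}(L_r)}$ with $\big\|(\sum |f_n|^2)^{1/2}\big\|_{L_r}$, and then Minkowski's integral inequality (valid because $r\ge 2$) bounds this by $\|\sum f_n n^{-s}\|_{\mathcal{H}_2(L_r)}$. You instead invoke the implication ``type $2\Rightarrow\mathcal{H}_2$-RCP'' from Theorem~\ref{T2implicaH2RCP} and then monotonicity in $p$. Your route is shorter, but note that in the paper's presentation the Proposition is placed \emph{before} the proof of Theorem~\ref{T2implicaH2RCP}; there is no actual circularity, since the forward direction of Theorem~\ref{T2implicaH2RCP} is proved via $2$-summing operators and does not use the Proposition, but your argument would force a reorganisation of the section. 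The paper's direct proof buys logical self-containment at this point in the exposition.

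For the negative part, the paper tests condition~(b) of Proposition~\ref{EqRCP} with the multi-variable monomials $f_n(w)=w^{\alpha(n)}$ and arbitrary scalars $(a_n)$: the same rotation trick (now $w\mapsto wz$ coordinatewise on $\mathbb{T}^{\mathbb N}$) gives $\|\sum a_n f_n n^{-s}\|_{\mathcal{H}_p(L_r)}=\|\sum a_n n^{-s}\|_{\mathcal{H}_r(\mathbb C)}$, while the left side dominates $\|\sum a_n n^{-s}\|_{\mathcal{H}_2(\mathbb C)}$; the failure then reduces to the non-comparability $\mathcal{H}_2\not\lesssim\mathcal{H}_r$ for $r<2$. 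You instead test condition~(d) with the single-variable family $x_n=w_1^n$ and $a_n=1$, and finish with the explicit Dirichlet-kernel growth $\|\sum_{n\le N}w^n\|_{L_r}\asymp N^{1-1/r}$ (resp.\ $\log N$). Both arguments hinge on the same rotation idea; yours is more hands-on and yields explicit rates, while the paper's is a bit more conceptual, packaging the obstruction as a comparison of scalar $\mathcal{H}_p$-norms.
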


\begin{proof}
  Assume first that $2 \le r < \infty$. It suffices to show that the space $L_r (\mathbb{T}^{\mathbb{N}})$ enjoys the $\mathcal{H}_2 -RCP$. Indeed, given $(f_n)_{n } \subseteq  L_r (\mathbb{T}^{\mathbb{N}})$ by Kahane's inequality \eqref{kahane}, we have that

 \begin{align*}
	 \Big\|\sum_{n=1}^N  f_n n^{-s}&\Big\|_{\Ha_2 ^{rad} (L_r (\mathbb{T}^{\mathbb{N}}))} \sim \mathbb{E}\Big\|\sum_{n=1}^N \varepsilon_n f_n\Big\|_{L_r (\mathbb{T}^{\mathbb{N}})} \sim \bigg( \int\limits_{\T^ {\N}} \mathbb{E} \Big| \sum_{n=1}^N \varepsilon_n f_n(z) \Big|^r  \,dz \bigg)^{\frac{1}{r}}\\
	 &\sim \bigg( \int\limits_{\T^ {\N}} \bigg( \sum_{n=1}^N \left| f_n(z) \right|^2 \bigg)^\frac{r}{2}  \,dz \bigg)^{\frac{1}{r}} \sim \bigg(  \int\limits_{\T^ {\N}} \Big\|\sum_{n=1}^N f_n (z) n^{-s}\Big\|_{\Ha_2 (\C)} ^r  \,dz \bigg)^{\frac{1}{r}} \\
	 &\le \Big\|\sum_{n=1}^N f_n (z) n^{-s}\Big\|_{\Ha_2 (L_r (\T^{\N}))},
 \end{align*} where in the last step we use Minkowski's integral inequality (regarding the $\mathcal H_2$ norm as an integral via Bohr's transform).

It remains to check that for $1\leq r<2$, the space $L_r ( \mathbb{T}^{\N})$ does not have $ \mathcal{H}_p - RCP$ for any $p \geq 2$.
Let $m\in \mathbb N$ and for every $1\leq n \leq m$ let $f_n \in L_r(\mathbb T^{\mathbb N})$ be the function defined by $f_n(w)=w^{\alpha(n)}$.
Fix scalars $(a_n)_{n=1}^m$. Using Proposition \ref{rad}, the contraction principle \eqref{contraction} and Khintchine's inequality, we get
\begin{align*}
\mathbb E \Big\|\sum_{n=1}^m \varepsilon_n a_n f_n n^{-s} \Big\|_{\mathcal H_{p}(L_r(\mathbb T^{\mathbb N}))} &
\sim \mathbb E \Big\|\sum_{n=1}^m \varepsilon_n a_n f_n \Big\|_{L_r(\mathbb T^{\mathbb N})}
 \gtrsim \mathbb E \Big\|\sum_{n=1}^m \varepsilon_n a_n \Big\|_{L_r(\mathbb T^{\mathbb N})}
 \\ & = \mathbb E \Big|\sum_{n=1}^m \varepsilon_n a_n \Big|
 \gtrsim \bigg( \sum_{n=1}^m \left| a_n \right|^2 \bigg)^{\frac{1}{2}}= \Big\|\sum_{n=1}^m a_n n^{-s} \Big\|_{\mathcal H_2(\mathbb C)}.
\end{align*}
On the other hand, we have
		\begin{align*}
		\Big\|\sum_{n=1}^m a_n f_n n^{-s} \Big\|_{\mathcal H_{p}(L_r(\mathbb
			T^{\mathbb N}))}
		&= \bigg( \int_{T^{\mathbb N}} \Big\|\sum_{n=1}^m a_n f_n z^{\alpha(n)}
		\Big\|_{L_r(\mathbb T^{\mathbb N})}^{p} \,d z\bigg)^{\frac{1}{p}} \\
		&= \bigg( \int_{T^{\mathbb N}} \bigg( \int_{T^{\mathbb N}} \Big|\sum_{n=1}^m
		a_n w^{\alpha(n)} z^{\alpha(n)} \Big|^r \,d w \bigg)^{\frac{p'}{r}} \,d
		z\bigg)^{\frac{1}{p'}} \\
		&= \bigg( \int_{T^{\mathbb N}} \Big|\sum_{n=1}^m a_n w^{\alpha(n)} \Big|^r
		\,d w\bigg)^{\frac{1}{r}}
		= \Big\|\sum_{n=1}^m a_n n^{-s} \Big\|_{\mathcal H_r(\mathbb C)}.
		\end{align*}
Since $r<2$ and $\mathcal{H}_p(\mathbb{C})$ norms are pairwise comparable like $L_p(\mathbb{C})$, there is no constant $C>0$ independent of $m$ such that for every
		choice of scalars $(a_n)_{n=1}^m$
		\begin{align*}
		\Big\|\sum_{n=1}^m a_n n^{-s} \Big\|_{\mathcal H_{2}(\mathbb C)}
		\lesssim \Big\|\sum_{n=1}^m a_n n^{-s} \Big\|_{\mathcal H_{r}(\mathbb
			C)} \quad.
		\end{align*}
		This completes the proof.
	\end{proof}

Now the proof of Theorem \ref{teo3} is simple.

\begin{proof}[Proof of Theorem \ref{teo3}]
We prove this statement by contraposition.
Assume $$s= \sup\{r:\,X\textrm{ has type }r\}<2.$$
By the Maurey-Pisier Theorem \cite{MP} (see also \cite[Chapter~14]{DJT}),
$\ell_s$ is finitely representable
in $X$.
Consequently, the space $L_s(\mathbb T^{\mathbb N})$ is also finitely
representable in $X$ since $L_s$ spaces are finitely representable in $\ell_s$.
As $L_s(\mathbb T^{\mathbb N})$ fails to have the  $\mathcal{H}_p$ random convergence property for $2 \le p < \infty$ and this is clearly a local property, the result follows.
\end{proof}

Finally, we show Theorem \ref{T2implicaH2RCP} holds.

\begin{proof}[Proof of Theorem \ref{T2implicaH2RCP}]
We prove only the first assertion. The second one is omitted since the proof is very similar. Assume that $X$ has type $2$. Let $(\gamma_n)_{n=1}^m$ denote independent identically distributed gaussian random variables.
Given $(x_n)_{n=1}^m\subset X$, let us consider the following operators:
$$
\begin{array}{ccccccccc}
                               T: & \ell_2^m&\rightarrow &X&\hspace{1cm}& S: & L_2(\mathbb T^\N)&\rightarrow &X \\

                                & e_n&\mapsto&x_n& &&z^\alpha&\mapsto&x_{n(\alpha)}
\end{array}
$$
Using the Proposition \ref{rad}, the comparison between Rademacher and gaussian variables (cf. \cite[(4.2)]{T-J}) and \cite[Theorem 12.2]{T-J}, we have
\begin{align}
\label{eq9}
  \mathbb E \Big\|\sum_{n=1}^m \varepsilon_n x_n n^{-s} \Big\|_{\mathcal H_2(X)}\sim \mathbb E \Big\|\sum_{n=1}^m \varepsilon_n x_n\Big\|_X \lesssim \mathbb E \Big\|\sum_{n=1}^m \gamma_n x_n\Big\|_X
  \lesssim \pi_2(T^*),
\end{align}
where $\pi_2$ is the 2-summing operator norm (see \cite[Chapter~2]{DJT} for the definition and basic properties).
From the definition of 2 summing operators, it is easy to check that
\[\pi_2(T^*)=\pi_2(S^*)
\leq \Big\|\sum_{\alpha} x_{n(\alpha)} z^\alpha \Big\|_{\mathcal H_2(X)}
\leq \Big\|\sum_{n=1}^m x_n n^{-s} \Big\|_{\mathcal H_2(X)},\]
which together with \eqref{eq9} proves $X$ has $\mathcal{H}_2 -RCP$.

For the converse we follow \cite{Bu}. Given $x_1, \ldots, x_N\in X$,  we have to prove that
\[
 \bigg( \mathbb{E}_{\varepsilon} \Big\|\sum_{n=1}^N \varepsilon_n x_n \Big\|_X^2 \bigg)^{1/2} \lesssim 	 \bigg( \sum_{n=1}^N \norm{x_n}_X^2 \bigg)^{1/2}.
\]
Fix $f_n\in L^2(\T)$ with $\norm{f_n}_{2}=1$ and disjoint support. An easy calculation gives us
\begin{align*}
 \bigg( \mathbb{E}_{\varepsilon} \int_\T \Big\|\sum_{n=1}^N \varepsilon_n x_n f_n \Big\|^2 \bigg)^{1/2} =\bigg( \sum_{n=1}^N \norm{x_n}_X^2 \bigg)^{1/2}.
\end{align*}
Fix  $\varepsilon >0$. Since trigonometric polynomials are dense in $L_2(\T)$, there is a  polynomial  $h_{n} = \sum_{j= -N_n}^{N_n} a_{n,j} z^{j}$ such that
\[
\|f_n - h_{n}\|_{2} < \frac{\varepsilon}{N\sup_{1\leq i\leq N}\|x_i\|_X} .
\]
There is no loss of generality in assuming that $\norm{h_{n}}_2 =1$.
Therefore, we have
\begin{align}\label{eq5}
\bigg( \mathbb{E}_{\varepsilon} \int_\T \Big\|\sum_{n=1}^N \varepsilon_n h_n x_n\Big\|^2 \bigg)^{1/2} &\le \bigg( \mathbb{E}_{\varepsilon} \int_\T \Big\|\sum_{n=1}^N \varepsilon_n f_n x_n \Big\|^2 \bigg)^{1/2} + \varepsilon \notag\\
&= \bigg( \sum_{n=1}^N \norm{x_n}_X^2 \bigg)^{1/2} + \varepsilon
\end{align}
Notice that we can choose $M_n\in\N$ sufficiently large so that  the powers of $z$ appearing in $h_n z^{M_n}$ are positive and do not overlap. More precisely, we have
\[
 h_n z^{M_n} = \sum_{j \in J_n} b_{n,j} z^j,
\]
where $b_{n,j}= a_{n, j - M_n} $ and $J_n = \{ M_n - N_n , \ldots , M_n + N_n \}$ are pairwise disjoint. By the contraction principle \eqref{contraction} and Proposition \ref{EqRCP} we deduce
\begin{align}	\label{eq6}
	\bigg( \mathbb{E}_{\varepsilon} \int_\T \Big\|\sum_{n=1}^N \varepsilon_n h_n x_n  \Big\|^2 \bigg)^{1/2}
	&\sim \bigg( \mathbb{E}_{\varepsilon} \int_\T \Big\|\sum_{n=1}^N \varepsilon_n  h_n z^{M_n} x_n\Big\|^2 \bigg)^{1/2}\notag
	\\& =\bigg( \mathbb{E}_{\varepsilon} \int_\T \Big\|\sum_{n=1}^N \sum_{j \in J_n} \varepsilon_n  b_{n,j} z^j x_n \Big\|^2 \bigg)^{1/2} \notag
	\\ & \gtrsim \bigg( \mathbb{E}_{\varepsilon} \mathbb{E}_{\delta}\int_\T \Big\|\sum_{n=1}^N \sum_{j \in J_n} \delta_{n,j} \varepsilon_n   b_{n,j} z^j x_n\Big\|^2 \bigg)^{1/2}\notag
	\\ & \sim \bigg( \mathbb{E}_{\delta}  \Big\|\sum_{n=1}^N \sum_{j \in J_n} \delta_{n,j}  b_{n,j} x_n\Big\|^2 \bigg)^{1/2},
\end{align}
where $\delta_n,j$ are independent Bernoulli random variables.  Theorem \ref{teo3} tells us that $X$ has non-trivial type  and therefore finite cotype. Thus,  the variables $\delta_{n,j}$  may be replaced by independent Gaussian variables $\gamma_{n,j}$.  Define $\gamma_n = \sum_{j \in J_n} b_{n,j} \gamma_{n,j}$ and observe that they are independent Gaussian variables of variance 1 since $  \sum_{j \in J_n } |b_{n,j}|^2 = \norm{h_{n} z^{M_n}}_2 = \norm{h_{n}}_2  = 1$. Pushing inequality \eqref{eq6} a little further we get
\begin{align}
\label{eq8}
	\bigg( \mathbb{E}_{\varepsilon} \int_\T \Big\|\sum_{n=1}^N \varepsilon_n  h_n x_n \Big\|^2 \bigg)^{1/2}
	 & \gtrsim \bigg( \mathbb{E}_{\gamma} \Big\|\sum_{n=1}^N \sum_{j \in J_n} b_{n,j} \gamma_{n,j} x_n  \Big\|^2 \bigg)^{1/2}
	 \gtrsim \bigg( \mathbb{E}_{\gamma} \Big\|\sum_{n=1}^N  \gamma_{n} x_n  \Big\|^2 \bigg)^{1/2} \notag
	 \\ & \gtrsim \bigg( \mathbb{E}_{\varepsilon} \Big\|\sum_{n=1}^N  \varepsilon_{n} x_n  \Big\|^2 \bigg)^{1/2}.
\end{align}
Gathering \eqref{eq5} and \eqref{eq8} together leads to the conclusion.
\end{proof}

\subsection*{Acknowledgements} The first three authors were partially supported by CONICET-PIP 11220130100329CO and ANPCyT PICT 2015-2299. The second and third authors are also supported by a CONICET doctoral fellowship. Fourth author gratefully acknowledges support of Spanish Ministerio de Econom\'{\i}a, Industria y Competitividad through grants MTM2016-76808-P, MTM2016-75196-P, and the ``Severo Ochoa Programme for Centres of Excellence in R\&D'' (SEV-2015-0554).

\bibliographystyle{plain}
\bibliography{biblio}

\begin{thebibliography}{10}

\bibitem{Bu}
Wolfgang Arendt and Shangquan Bu.
\newblock Fourier series in {B}anach spaces and maximal regularity.
\newblock In {\em Vector measures, integration and related topics}, volume 201
  of {\em Oper. Theory Adv. Appl.}, pages 21--39. Birkh\"{a}user Verlag, Basel,
  2010.

\bibitem{SAM}
P.~Billard, S.~Kwapie\'{n}, A.~Pe{\l}czy\'{n}ski, and Ch. Samuel.
\newblock Biorthogonal systems of random unconditional convergence in {B}anach
  spaces.
\newblock In {\em Texas {F}unctional {A}nalysis {S}eminar 1985--1986 ({A}ustin,
  {TX}, 1985--1986)}, Longhorn Notes, pages 13--35. Univ. Texas, Austin, TX,
  1986.

\bibitem{CaDeSe14}
Daniel Carando, Andreas Defant, and Pablo Sevilla-Peris.
\newblock Bohr's absolute convergence problem for {${ H}_p$}-{D}irichlet series
  in {B}anach spaces.
\newblock {\em Anal. PDE}, 7(2):513--527, 2014.

\bibitem{CaDeSe1}
Daniel Carando, Andreas Defant, and Pablo Sevilla-Peris.
\newblock Almost sure-sign convergence of {H}ardy-type {D}irichlet series.
\newblock {\em J. Anal. Math.}, 135(1):225--247, 2018.

\bibitem{CMGaMa18}
Jaime Castillo-Medina, Domingo Garc\'ia, and Manuel Maestre.
\newblock Isometries between spaces of multiple {D}irichlet series.
\newblock {\em Journal of Mathematical Analysis and Applications}, in press.

\bibitem{DeGaMaPG08}
Andreas Defant, Domingo Garc\'{i}a, Manuel Maestre, and David
  P\'{e}rez-Garc\'{i}a.
\newblock Bohr's strip for vector valued {D}irichlet series.
\newblock {\em Math. Ann.}, 342(3):533--555, 2008.

\bibitem{DGMSxx}
Andreas Defant, Domingo Garc\'ia, Manuel Maestre, and Pablo Sevilla-Peris.
\newblock {\em Dirichlet series and holomorphic functions in high dimensions}.
\newblock The New Mathematical Monographs. Cambridge University Press,
  Cambridge, United Kingdom., to appear.

\bibitem{H+}
Andreas Defant and Antonio P\'{e}rez.
\newblock Hardy spaces of vector-valued {D}irichlet series.
\newblock {\em Studia Math.}, 243(1):53--78, 2018.

\bibitem{DeSchSP14}
Andreas Defant, Ursula Schwarting, and Pablo Sevilla-Peris.
\newblock Estimates for vector valued {D}irichlet polynomials.
\newblock {\em Monatsh. Math.}, 175(1):89--116, 2014.

\bibitem{DJT}
Joe Diestel, Hans Jarchow, and Andrew Tonge.
\newblock {\em Absolutely summing operators}, volume~43 of {\em Cambridge
  Studies in Advanced Mathematics}.
\newblock Cambridge University Press, Cambridge, 1995.

\bibitem{GM}
W.~T. Gowers and B.~Maurey.
\newblock The unconditional basic sequence problem.
\newblock {\em J. Amer. Math. Soc.}, 6(4):851--874, 1993.

\bibitem{TEOTAO}
Ben Green and Terence Tao.
\newblock The primes contain arbitrarily long arithmetic progressions.
\newblock {\em Ann. of Math. (2)}, 167(2):481--547, 2008.

\bibitem{hedenmalm2003carleson}
H{\aa}kan Hedenmalm and Eero Saksman.
\newblock Carleson's convergence theorem for {D}irichlet series.
\newblock {\em Pacific J. Math.}, 208(1):85--109, 2003.

\bibitem{KW}
Stanis{\l}aw Kwapie\'{n} and Wojbor~A. Woyczy\'{n}ski.
\newblock {\em Random series and stochastic integrals: single and multiple}.
\newblock Probability and its Applications. Birkh\"{a}user Boston, Inc.,
  Boston, MA, 1992.

\bibitem{lopez2016bases}
J.~Lopez-Abad and P.~Tradacete.
\newblock Bases of random unconditional convergence in {B}anach spaces.
\newblock {\em Trans. Amer. Math. Soc.}, 368(12):9001--9032, 2016.

\bibitem{mcl}
Barbara~D. MacCluer.
\newblock {\em Elementary functional analysis}, volume 253 of {\em Graduate
  Texts in Mathematics}.
\newblock Springer, New York, 2009.

\bibitem{MP}
Bernard Maurey and Gilles Pisier.
\newblock S\'{e}ries de variables al\'{e}atoires vectorielles ind\'{e}pendantes
  et propri\'{e}t\'{e}s g\'{e}om\'{e}triques des espaces de {B}anach.
\newblock {\em Studia Math.}, 58(1):45--90, 1976.

\bibitem{pisier1978inegalites}
G.~Pisier.
\newblock Les in\'{e}galit\'{e}s de {K}hintchine-{K}ahane, d'apr\`es {C}.
  {B}orell.
\newblock pages Exp. No. 7, 14, 1978.

\bibitem{QueQue13}
Herv\'{e} Queff\'{e}lec and Martine Queff\'{e}lec.
\newblock {\em Diophantine approximation and {D}irichlet series}, volume~2 of
  {\em Harish-Chandra Research Institute Lecture Notes}.
\newblock Hindustan Book Agency, New Delhi, 2013.

\bibitem{rudin1960trigonometric}
Walter Rudin.
\newblock Trigonometric series with gaps.
\newblock {\em J. Math. Mech.}, 9:203--227, 1960.

\bibitem{Seigner}
J.~A. Seigner.
\newblock Rademacher variables in connection with complex scalars.
\newblock {\em Acta Math. Univ. Comenian. (N.S.)}, 66(2):329--336, 1997.

\bibitem{T-J}
Nicole Tomczak-Jaegermann.
\newblock {\em Banach-{M}azur distances and finite-dimensional operator
  ideals}, volume~38 of {\em Pitman Monographs and Surveys in Pure and Applied
  Mathematics}.
\newblock Longman Scientific \& Technical, Harlow; copublished in the United
  States with John Wiley \& Sons, Inc., New York, 1989.

\end{thebibliography}
 \bigskip
\noindent {\sc Departamento de Matem\'{a}tica,
	Facultad de Cs. Exactas y Naturales, Universidad de Buenos Aires and IMAS-UBA-CONICET, Argentina.}
 
 \medskip

\noindent \tt dcarando@dm.uba.ar

\noindent \tt fmarceca@dm.uba.ar

\noindent \tt mscotti@dm.uba.ar

\bigskip

\noindent {\sc Instituto de Ciencias Matem\'aticas (CSIC-UAM-UC3M-UCM) 
Consejo Superior de Investigaciones Cient\'ificas 
C/ Nicol\'as Cabrera, 13--15, Campus de Cantoblanco UAM 
28049 Madrid, Spain. }

\medskip

\noindent \tt pedro.tradacete@icmat.es

\end{document}